\newenvironment{proof} {\noindent {\em \textbf{Proof}} } { \hfill \fbox{~} \\ }
\def\1{1\kern-.20em {\rm l}}
\def\abs  #1\par{\vskip2truemm {\noindent} {\parindent=12truemm\narrower\baselineskip=3truemm \pf #1\par}}
\newtheorem{theorem}{Theorem}[section]
\newtheorem{corollary}[theorem]{Corollary}
\newtheorem{definition}[theorem]{Definition}
\newtheorem{lemma}[theorem]{Lemma}
\newtheorem{proposition}[theorem]{Proposition}
\numberwithin{equation}{section}
\def\1{1\kern-.20em {\rm l}}
\def\qed{\ifmmode\mbox{\hfill\sqb}\else{\ifhmode\unskip\fi
\nobreak\hfil
\penalty50\hskip1em\null\nobreak\hfil\sqb
\parfillskip=0pt\finalhyphendemerits=0\endgraf}\fi}
\def\cqfd{\ifmmode\sqw\else{\ifhmode\unskip\fi\nobreak\hfil
\penalty50\hskip1em\null\nobreak\hfil\sqw
\parfillskip=0pt\finalhyphendemerits=0\endgraf}\fi}
\title{\bf On a new class of 2-orthogonal polynomials, II :\\
 The integral representations}
 \author{\small Khalfa DOUAK\footnote{The corresponding author. } $\ $ and Pascal MARONI$^{\dag}$\\
\small Laboratoire Jacques-Louis Lions, Sorbonne  Universit\'{e}-CNRS,\\
\small Boite courrier 187, 75252, Paris Cedex 05, France\\
\small Email: $^{\ast}$khalfa.douak@gmail.com {\rm and} $^{\dag}$pascalmaroni@orange.fr}
\date{\today}
\begin{document}
\maketitle
\begin{abstract}
\noindent A new class of 2-orthogonal polynomials satisfying orthogonality conditions with respect  to a pair of linear functionals $(u_0,u_1)$ was presented in Douak K \& Maroni P [On a new class of 2-orthogonal polynomials, I: the recurrence relations and some properties. Integral Transforms Spec Funct. 2021;32(2):134-153]. Six worthwhile special cases were pointed out there. Here we are precisely interesting with  the integral representation problem for the functionals associated to these polynomials in  each case.
The focus will be on the  matrix  differential equation $\big({\bf\Phi U}\big)'+{\bf\Psi U}=0$, where ${\bf\Phi}$, ${\bf\Psi}$ are  $2\times2$ polynomial matrices and  ${\bf U}$ stands for the vector ${^t}(u_0 , u_1)$. We first  establish the differential systems satisfied by the functionals $u_0$ and $u_1$ and  then, depending on the case, we show that they admit integral representation via weight functions supported on the real line or on positive real line and defined in terms of various special functions. In order for certain integral representations to exist, addition of Dirac mass is necessary.
\end{abstract}


%
%


\maketitle

{\bf Keywords.} {\it  $d$-orthogonal polynomials,  multiple orthogonal polynomials, classical orthogonal polynomials, differential equations, special functions, integral representations.}\\
\noindent {\bf AMS Classification.}  33C45; 42C05.

\section{Introduction and Motivation}	
The present paper represents a continuation of our earlier  work \cite{1} where we gave a new class of classical 2-orthogonal polynomials,
 we have achieved it for a matter  of completeness.  The reader is therefore invited to take note of the principal outcomes presented therein before reading this part.
Our main purpose  here is to  look for an integral representation  of both functionals $u_0$ and $u_1$ with respect to which the 2-orthogonality conditions  hold. The primary  objective is to seek  a pair of weight functions $w_0$ and $w_1$ supported on (intervals of) the real axis.
These weights turn out to  satisfy specific linear differential equations of order less than or equal to $2$, and subject to certain boundary conditions.
In discussing these equations and in trying to solve them, we will restrict ourselves to imposing some conditions on the free parameters to find the desired solutions.
Recall in passing that, there are many $2$-orthogonal polynomials for which the associated weights are not known  or are not unique but has a characteristic third order standard recurrence relation.\par
\noindent  For the convenience of the reader we follow the same notations used in Part I.\\
  By ${\mathscr P}$ we denote the vector space of polynomials of one variable with complex coefficients and we let ${\mathscr P}'$ its algebraic dual. The duality brackets between ${\mathscr P}$ and  ${\mathscr P}'$ is denoted by $\bigl< .\,,\, .\bigr>$. For  a sequence of monic polynomials $\{P_n\}_{n \geqslant 0}$, with $\deg P_n=n$, we associate its   dual sequence $\{u_n\}_{n \geqslant 0}$  defined by $\bigl< u_n , P_n \bigr> = \delta_{nm} ;\, n, m \geqslant0$, where $\delta_{n m}$ is the Kronecker's delta symbol.\\
Let us now introduce the  2-orthogonal polynomials sequence (2-OPS) $\{P_n\}_{n\geqslant0}$ previously investigated in  \cite{1}.
These polynomials have been obtained by working only on the system satisfied by their recurrence coefficients whose the expressions
  are explicitly determined. They satisfy the third-order recurrence relation
\begin{align}
&P_{n+3}(x) = (x - \beta_{n+2}) P_{n+2}(x) - \alpha_{n+2} P_{n+1}(x) -\gamma_{n+1} P_n (x) \, , \; n \geqslant 0 ,\label{eq:1.1}\\
&P_0(x) = 1 \, , \; P_1(x) = x - \beta_0 \ ,  \; P_2(x) = (x -\beta_1)P_1(x) - \alpha_1,\label{eq:1.2}
\end{align}
with the recurrence coefficients are given, for $n\geqslant 0$, by
\begin{align}
\beta_{n} &= (\varepsilon_nr-2s)n-\frac{1}{2}(1-\varepsilon_n)r+ \beta_0,\label{eq:1.3}\\
\alpha_{n+1}&= (n+1)\Bigl[  (r^2+s^2)  n+(1-\varepsilon_n)rs +\alpha_1\Bigr],\label{eq:1.4}\\
\gamma_{n+1}&= (n+2)(n+1)\Bigl[\varepsilon_n (r-\varepsilon_ns )^2 r n- \frac{1}{2}(1-\varepsilon_n)\big((r+s)^2+2\alpha_1 \big)r +\gamma\Bigr].\label{eq:1.5}
\end{align}
A remarkable character of these polynomials is that they possess the Hahn’s property \cite[Def. 1.8]{1} and so they are considered to be classical.
 This means that the polynomials $Q_n:=(n+1)^{-1}P'_{n+1}, \, n=0, 1, 2, \ldots,$ are also 2-orthogonal.
 In this regard, we have established that they satisfy the third-order recurrence relation
\begin{align}
&Q_{n+3}(x) = (x - \tilde\beta_{n+2})Q_{n+2}(x)  - \tilde\alpha_{n+2}Q_{n+1}(x) - \tilde\gamma_{n+1} Q_n(x)\,  , \; n \geqslant 0 ,\label{eq:1.6}\\
&Q_0(x) = 1 \, , \; Q_1(x) = x - \tilde\beta_0 \; , \;\; Q_2(x) = (x -\tilde\beta_1)Q_1(x) - \tilde\alpha_1,\label{eq:1.7}
\end{align}
with
\begin{align}
\tilde\beta_{n}&= -(\varepsilon_nr+2s)n- \frac{1}{2}(1+\varepsilon_n)r-s+ \beta_0, \; n \geqslant 0,\label{eq:1.8}\\
\tilde\alpha_{n}&= n\Bigl[(r^2+s^2)n+(1-\varepsilon_n)rs+\alpha_1\Bigr],\; n \geqslant 1,\label{eq:1.9}\\
\tilde\gamma_{n}&= (n+1)n\Bigl[\varepsilon_n (r-\varepsilon_ns )^2 r n- \frac{1}{2}(1-\varepsilon_n)\big((r+s)^2+2\alpha_1 \big)r +\gamma\Bigr],\;
n\geqslant 1.\label{eq:1.10}
\end{align}
For this solution, we can see that recurrence coefficients of both $\{P_n\}_{n \geqslant 0}$ and $\{Q_n\}_{n \geqslant 0}$ being interconnected in pairs by
\begin{align}
\tilde\beta_n = \beta_{n+1} + \delta_n,\ n\geqslant0\ ;\
\tilde\alpha_n = \frac{n}{n+1}\alpha_{n+1}\ ; \ \tilde\gamma_n = \frac{n}{n+2}\gamma_{n+1},\ n\geqslant1,\label{eq:1.11}
\end{align}
where $\varepsilon_n=(-1)^n$, $\delta_n=s+r\varepsilon_n,\ n\geqslant0$, and $s$, $r$ are two arbitrary constants, with the notation $\gamma:=\frac{1}{2}\gamma_1$.
 Recall that the last two identities in \eqref{eq:1.11} follow, respectively, from formulas (2.12)-(2.13)  given in \cite[p.7]{1} if we take $\rho_n=1$, $\theta_n=1$. It is understood that, for all $n\geqslant1$, $\alpha_n\ne0$ and  $\gamma_{n}\ne0$.\par

\noindent In addition to the forgoing, we have the differential-recurrence relation
\begin{align}
P_{n}(x) = Q_{n}(x) - \delta_{n+1} n Q_{n-1}(x) ,\ n\geqslant 0\ ; (Q_{-1}=0).\label{eq:1.12}
\end{align}
It is worth noting here that the above expression of the $\tilde\beta_{n}$'s coefficients  is slightly different from the one given in \cite{1}.
Roughly  speaking, \eqref{eq:1.8}   corrects and replaces the identity (2.32) established there without compromising any of the obtained results.
It we consider now the dual sequence $\{v_n\}_{n \geqslant 0}$ associated to $\{Q_n\}_{n \geqslant 0}$, then the two sequences $\{u_n\}_{n \geqslant 0}$
 and $\{v_n\}_{n \geqslant 0}$ are interconnected via the identity \cite{2}
\begin{align}
v'_n=-(n+1)u_{n+1},\ n\geqslant0.\label{eq:1.13}
\end{align}
By the result given in \cite[Prop. 2.3]{3}, the following recurrence relation holds  in ${\mathscr P}'$
\begin{align}
  xu_n= u_{n-1}+\beta_nu_{n}+\alpha_{n+1}u_{n+1}+\gamma_{n+1}u_{n+2},\ n\geqslant0\ ;\ (u_{-1}=0),\label{eq:1.14}
\end{align}
It was also shown that the above identity  permits to write the elements of the dual sequence $\{u_n\}_{n \geqslant 0}$ in terms of the pair $(u_0,u_1)$ with polynomial coefficients of degree less than or equal to $n$.\\
 In fact the recurrence   \eqref{eq:1.14}  is another property characterizing the 2-OPS $\{P_n\}_{n \geqslant 0}$.\\
Let ${\bf U}={^t}(u_0 , u_1)$ be the vector functional with respect to (w.r.t.) which the polynomials  $P_n,\, n=0, 1, \ldots$, are  2-orthogonal and ${\bf V}={^t}(v_0 , v_1)$ be the vector  functional w.r.t. which the polynomials $Q_n,\, n=0, 1, \ldots$, are 2-orthogonal.
Due to the fact that the polynomials  $P_n$ are classical, it was shown in \cite[Sec. 3]{4} that the vector ${\bf U}$ satisfies the  matrix  differential equation $\big({\bf\Phi U}\big)'+{\bf\Psi U}=0$, where ${\bf\Phi}$ and ${\bf\Psi}$ are  $2\times2$ polynomial matrices. Further,
  ${\bf V}$ and ${\bf U}$ are interlinked via the relation ${\bf V}={\bf \Phi}{\bf U}$. \\

  The structure of the paper is  as follows. In the next section we remind some special functions and useful operations we need in the sequel.
Section 3 is devoted to the fundamental characterization of the classical 2-orthogonal polynomials via the matrix differential  equation (Theorem 3.1).
In  the fourth section, we first expose the outline of the procedure used to look for integral representation for the pair of functionals $u_0$ and $u_1$ via weight functions denoted $w_0$ and $w_1$, respectively. Afterwards, we present in  each  special case our  results  by solving the differential equations satisfied by these  functions. In that regard, depending on the case and under certain conditions, we show that the obtained 2-OPS are associated with  weights involving certain special functions as  Airy function, Gauss-Airy function, modified Bessel function or Tricomi function.
In three subcases, however, we  obtain that the resulting polynomials are 2-orthogonal with respect to  a pair of weights $(w_0,w_1)$, where $w_0$ is either Laguerre's weight or Hermite's one  and $w_1$ is given as a combination of $w_0$ and incomplete gamma function or error function. Note finally that certain weight functions are supported on the whole real line, while others are supported on the positive real line (by adding Dirac masses in some subcases).

\section{Some Special Functions and Useful Operations}

In this section we have enclosed a brief exposition of some special functions with certain of their main properties that will be used later. We have also recalled certain useful operations in  ${\mathscr P}'$.

\subsection{Modified Bessel functions}

 For  $x>0$ and $\nu>-1$, the modified Bessel functions $I_\nu(x)$ of the first kind and $K_\nu(x)$ of the second kind (Macdonald function) are continuous  positive functions of $x$ and $\nu$ (see for instance \cite[Sec. 10.25]{5}). Define  the two scaled modified Bessel functions  $\omega_\nu(x)$ and $\rho_\nu(x)$   as follows
 \begin{align}
 \hskip1cm\omega_\nu(x)&=x^{\nu/2}I_\nu(2\sqrt{x}), && x>0 \, ,\ \nu>-1,\label{eq:2.1}\\
\rho_\nu(x)&=2x^{\nu/2}K_\nu(2\sqrt{x}), && x>0 \, ,\ \nu>-1. \label{eq:2.2}\hskip2cm
\end{align}
Based on the properties of $I_\nu(x)$ and $K_\nu(x)$, we obtain that  both  $\omega_\nu(x)$ and $\rho_\nu(x)$ are also positive over the positive real axis  satisfying
 some differential properties and recurrence relations  (see for instance \cite{6,7,8,9}).

 \subsection{Airy functions}

The Airy functions of the first and second kind denoted ${\rm Ai}(z)$ and ${\rm Bi}(z)$, respectively, are two standard solutions linearly independent of the Airy differential equation \cite[Chap. 9]{5}:
\begin{align}
 y''-zy=0.\label{eq:2.3}
 \end{align}
The general solution to \eqref{eq:2.3} is then  $y=c_1{\rm Ai}(z)+c_2{\rm Bi}(z),$ where $c_1$ and $c_2$ are arbitrary constants.\\
By introducing the auxiliary argument $\zeta:={\frac{2}{3}}z^{3/2}$ with $|\arg z|<{\frac{2\pi}{3}}$, the Airy function ${\rm Ai}(z)$ and its derivative ${\rm Ai}'(z)$ may be expressed in terms of the Macdonald function as follows
\begin{align}
{\rm Ai}(z)={\frac{1}{\pi}}\sqrt{\frac{z}{3}}K_{\pm\frac{1}{3}}(\zeta)\quad\mbox{and}\quad
{\rm Ai}'(z)=-{\frac{1}{\pi}}{\frac{z}{\sqrt{3}}}K_{\pm\frac{2}{3}}(\zeta).\label{eq:2.4}
\end{align}
The original Airy function of the first kind  with a real argument is defined by the Airy integral
\begin{align}
{\rm Ai}(x)=\frac{1}{\pi}\int_0^{+\infty}\!\!\!\cos\Big(xt+\frac{t^3}{3}\Big)dt ,\; x\geqslant0.\label{eq:2.5}
\end{align}
A various of integral representations of the function ${\rm Ai}(x)$ have been given by different authors (see, e.g., \cite{10} and the references therein).
 Some extensions of this function was also established afterwards. In this sense, we draw attention to two examples. First, the two-variable extension introduced in \cite{11} as
\begin{align}
{\rm Ai}(x,\tau)=\frac{1}{2\pi}\int_{-\infty}^{+\infty}\!\!\! \exp\Big[i\Big(xt+\tau t^3\Big)\Big]dt \ , \;  \tau>0. \label{eq:2.6}
\end{align}
The function ${\rm Ai}(x,\tau)$ satisfies the differential equation $3 \tau y''-xy=0$, as is easy to check. When $\tau$ is interpreted as a parameter, ${\rm Ai}(x,\tau)$ is expressed in terms of the ordinary Airy function as follows
\begin{align}
{\rm Ai}(x,\tau)={(3\tau)}^{-\frac{1}{3}}{\rm Ai}\Big({(3\tau)}^{-\frac{1}{3}}x\Big).\label{eq:2.7}
\end{align}
 Another function called Gauss-Airy function and presented in \cite{12} as a three-variable extension of the ordinary Airy function is defined by introducing a Gaussian distribution in the integral representation of the Airy function, that is,
\begin{align}
{\rm GAi}(x,\xi,\tau)=\frac{1}{\pi}\int_{0}^{+\infty}\!\! e^{-\xi t^2}\cos\Big(xt+\tau\frac{t^3}{3}\Big)dt\ ,\; \xi\geqslant 0 ,\ x, \tau \in\mathbb{R},\label{eq:2.8}
\end{align}
where $\xi$ and $\tau$ may be considered as two parameters. It is also shown that this function generates the so-called three-variable Hermite polynomials presented by the following integral representation as
\begin{align}
{_3}H_n(x,y,z)=\int_{-\infty}^{+\infty}\! t^n{\rm GAi}(t-x,y,z)dt\ ,\ \ n\geqslant 0.\label{eq:2.9}
\end{align}
 For more properties of these polynomials  we refer the reader to \cite{12} and the references given there.\\
Besides, replacing $a$ by $ia$ into the formula (2.25) given in \cite[p.10]{10} yields
\begin{align}
\frac{1}{\pi}\int_{0}^{+\infty}\!\!\! e^{-a t^2} \cos\Big(xt+\frac{t^3}{3}\Big)dt= e^{a(x+\frac{2}{3}a^2)}{\rm Ai}(x+a^2).\label{eq:2.10}
\end{align}
With specific values of $\xi$ and $\tau$, the use of {\eqref{eq:2.8}} with  {\eqref{eq:2.6}} and \eqref{eq:2.10} respectively gives
\begin{align}
{\rm GAi}(x,0,3\tau)={\rm Ai}(x,\tau)\quad \mbox{and}\quad{\rm GAi}(x,a,1)= e^{a(x+\frac{2}{3}a^2)}{\rm Ai}(x+a^2).\label{eq:2.11}
\end{align}
 A trivial verification shows that the  function ${\rm Ai}(x+a^2)$ is a solution of the Airy-type equation
\begin
{align}y''-(x+a^2)y=0.\label{eq:2.12}
\end{align}
We finish with the asymptotic approximations for ${\rm Ai}(\pm x)$ and ${\rm Ai}'(\pm x)$, as $x\to+\infty$ \cite[Sec. 9.7]{5}:
 \begin{align}
{\rm Ai}(x)\sim\frac{1}{2\sqrt{\pi}}x^{-1/4}e^{-\zeta} \quad &\mbox{and}\quad
{\rm Ai}'(x)\sim -\frac{1}{2\sqrt{\pi}}{x^{1/4}e^{-\zeta}};\label{eq:2.13}\\
{\rm Ai}(-x)\sim\frac{1}{\sqrt{\pi}}x^{-1/4}\cos\Big(\zeta-\frac{\pi}{4}\Big)\quad &\mbox{and}\quad
{\rm Ai}'(-x)\sim \frac{1}{\sqrt{\pi}}x^{1/4}\sin\Big(\zeta-\frac{\pi}{4}\Big).\label{eq:2.14}
\end{align}

\subsection{Kummer functions}

 The confluent hypergeometric function $M(a\, ; c\, ;z)$ designated also by the symbol ${_1}F_1(a\, ; c\, ; z)$, known as Kummer function of the first kind, is defined as
 \cite[Sec. 13]{5}
\begin{align}
M(a\, ; c\, ;z) =\sum_{n=0}^\infty{\frac{(a)_n}{(c)_n}}{\frac{z^n}{n!}}\; ,\quad |z|<+\infty\ ,\; (c \ne 0, -1, -2, \ldots),\label{eq:2.15}
\end{align}
where $z$ is a complex number, $a$ and $c$ are parameters which can take arbitrary real or complex values, with $(\alpha)_n$ is  Pochhammer's symbol defined, for any  $\alpha$ real or complex, by $(1)_n = n!$, $(\alpha)_0 = 1$ and $(\alpha)_n= \alpha(\alpha + 1) \cdots (\alpha + n-1),\ n= 1, 2,\ldots$.\\
This function is the first two standard solutions of the following confluent hypergeometric equation, also known as Kummer's equation,
\begin{align}
 zy''+(c-z)y'-ay=0.\label{eq:2.16}
 \end{align}
Another standard solution of Eq. \eqref{eq:2.16} closely related to $M$ is the function $U(a\, ; c\, ; z)$ (introduced by Tricomi) which is determined uniquely
 by the property
\begin{align}
U(a\, ; c\, ; z) \sim z^{-a}, \quad z\to +\infty \quad \hbox{in} \quad |\arg(z)|< \frac{3}{2}\pi.\label{eq:2.17}
\end{align}
 A second linearly independent solution to the function $U(a\, ; c\, ;z)$ is constructed as $ e^zU(c-a\, ; c\, ; -z)$.
 This fundamental pair of solutions is known to be numerically satisfactory in the neighborhood of infinity.\\
For small positive values of the argument, the value of the Tricomi function depends on the $c$ parameter.
The tables below  bring together some values of $U(a\, ; c\, ; x)$, as $x\to0$, when  $a, c$ are reals and $b=1+a-c$ is an auxiliary parameter. Here, the notations are that of \cite{13}
with  $\bm{\psi}$ is the Psi function and $\bm{\gamma}$ is the Euler constant.\\

\noindent \begin{table}[th]
{\begin{tabular}{@{}|c|c|c|@{}}\hline
 $ c=0 $ & $ 0<c<1 $ & $ c=1 $ \\ \hline
 $\frac{1}{\Gamma(1+a)}+\frac{x\ln{x}}{\Gamma(a)}$ &$ \frac{\Gamma(1 - c)}{\Gamma(b)}+\frac{\Gamma(c - 1)}{\Gamma(a)}x^{1-c}$ &$-\frac{2\bm{\gamma}+\bm{\psi}(a)+\ln{x}}{\Gamma(a)}$\\ \hline
\end{tabular}}
\end{table}
{Table 1. Values of $U$ when $x$ is small and $0\leqslant c\leqslant1$.}\\

\noindent \begin{table}[th]
{\begin{tabular}{|c|c|c|}\hline
   $ 1<c< 2  $ & $ c=2 $ & $ c>2 $\\ \hline
 $\frac{\Gamma(1 - c)}{\Gamma(b)}+\frac{\Gamma(c - 1)}{\Gamma(a)x^{c-1}}$ & $\!\frac{1}{\Gamma(a)x}+\frac{\ln{x}}{\Gamma(a-1)}$
&$ \frac{\Gamma(c - 2)[c-2-bx]}{\Gamma(a)x^{c-1}}$ \\ \hline
\end{tabular}}
\end{table}
{Table 2. Values of $U$ when $x$ is small and $c>1$.}\\

\noindent We also give the following useful formulas that we need later \cite{5}.\\
 $\bullet$ Two differentiation formulas of $U$:
   \begin{align}
 U'(a\, ; c\, ; z) &=-a U(a+1\, ; c+1\, ; z),\label{eq:2.18}\\
 \frac{d}{dz}\left(e^{-z}U(a\, ; c\, ; z)\right)&=- e^{-z}U(a\, ; c+1\, ; z).\label{eq:2.19}
 \end{align}
$\bullet$ Integral representations:
\begin{align}
M(a\, ; c\, ; z) &= \frac{\Gamma(c)}{\Gamma(a)\Gamma(c-a)}\int_0^1 e^{zt} t^{a-1} (1-t)^{c-a-1}dt,\quad \Re{(c)}>\Re{(a)}>0,\label{eq:2.20}\\
U(a\, ; c\, ; z) &= \frac{1}{\Gamma(a)}\int_0^{+\infty}\!\! e^{-zt} t^{a-1} (1+t)^{c-a-1}dt, \quad \Re{(z)}>0 ,\ \Re{(a)}>0.\label{eq:2.21}
\end{align}
 We also need the next formula valid for $\Re(z)>0$, $\Re(b)>\mbox{max}\bigl(\Re(c)-1,0\bigr)$
\begin{align}
\int_0^{+\infty}\!\!\!e^{-zt}t^{b-1}U\!\left(a ; c ; t\right)\!dt\!=\!\frac{\Gamma(b)\Gamma(b\!-\!c\!+\!1)}{\Gamma(a\!+\!b\!-\!c\!+\!1)}z^{-b}\!{_2}F_1\Big(\!a , b\, ; a+b-c+1 ; 1\!-\!\frac{1}{z}\Big).\label{eq:2.22}
\end{align}
$\bullet$  Asymptotic behavior of $M(a\, ; c\, ; z)$ for large argument:
\begin{align}
& M(a\, ; c\, ; z)= \frac{\Gamma(c)}{\Gamma(a)} z^{a-c} e^z\big[1 + {\mathscr O}(|z|^{-1})\big], \ \Re{(z)}>0; \ a\ne0, -1, -2, \ldots,\label{eq:2.23} \\
& M(a\, ; c\, ; -z)\!=\! \frac{\Gamma(c)}{\Gamma(c\!-\!a)} z^{-a} \big[1\! +\! {\mathscr O}(|z|^{-1})\big], \, \Re{(z)}\!>0; \, c- a\ne0, -1, -2, \ldots.\label{eq:2.24}
\end{align}

\subsection{Incomplete gamma functions}

 The incomplete gamma function $\gamma(a,x)$ and its complementary incomplete gamma function $\Gamma(a,x)$ are usually defined via the integrals \cite[Sec. 8]{5}
\begin{align}
\gamma(a,x)=\int^x_0\!\! t^{a-1} e^{-t}dt\quad\mbox{and}\quad\Gamma(a,x)=\int^{+\infty}_x \!\! t^{a-1} e^{-t}dt\ , \; x\geqslant0\; ,\; a>0.\label{eq:2.25}
\end{align}
 Clearly, $\gamma(a,0)=0$ and $\Gamma(a,0)=\Gamma(a)$. Using Euler's integral for  $\Gamma(a)$ yields
\begin{align}
\gamma(a,x)+\Gamma(a,x)=\Gamma(a).\label{eq:2.26}
\end{align}
 For $c>0$, from \eqref{eq:2.25} we can also write
\begin{align}
\gamma(a,cx)=c^a\int_0^x \!\! t^{a-1} e^{-ct}dt\quad\mbox{and}\quad\Gamma(a,cx)=c^a\int^{+\infty}_x \!\! t^{a-1} e^{-ct}dt.\label{eq:2.27}
\end{align}
Likewise, we have an analogous formula to \eqref{eq:2.26}
\begin{align}
\gamma(a,cx)+\Gamma(a,cx)= \Gamma(a).\label{eq:2.28}
\end{align}
 Finally, the asymptotic series representation for the function ${\Gamma}(a,x)$ as $ x\to+\infty $ is
\begin{align}
{\Gamma}(a,x) \sim\Gamma(a) x^{a-1}e^{-x}\sum_{k=0}^{+\infty}\frac{1}{\Gamma(a-k)\ x^k} , \quad a>0.\label{eq:2.29}
\end{align}
\subsection{Error functions}
The  error function ${\rm erf}(x)$ and its complementary error function denoted ${\rm erfc}(x)$ are, respectively, given by the indefinite integrals \cite[Sec. 7]{5}
\begin{align}
{\rm erf}(x)=\frac{2}{\sqrt{\pi}}\int_0^x\!\! e^{-t^2}dt \  \; \hbox{and}\ \;
{\rm erfc}(x)=\frac{2}{\sqrt{\pi}}\int_x^{+\infty}\!\! e^{-t^2}dt ,  \; {\rm for}\ x\in \mathbb{R}.\label{eq:2.30}
\end{align}
Since both functions are interrelated via ${\rm erfc}(x)=1-{\rm erf}(x)$, all properties of ${\rm erfc}(x)$  can be derived from those of ${\rm erf}(x)$.
For example, we see that  ${\rm erf}(0)=0$, ${\rm erfc}(0)=1$ and  ${\rm erf}({+\infty})=1$, ${\rm erfc}(+\infty)=0 $.\\
For large positive argument,   ${\rm erfc}(x)$ has the following asymptotic behavior
\begin{align}
{\rm erfc}(x)\sim \frac{e^{-x^2}}{x\sqrt{\pi}}, \quad \hbox{as}\ x\to+\infty.\label{eq:2.31}
\end{align}

\subsection{Some useful operations}

At the end of this section, let us recall a definition and  some basic operations which we need below \cite{15}.\\
For a functional $u$ and for all  polynomials $h$ and $f$, the left-multiplication of $u$ by a polynomial $h$ is given by $\big<hu\, ,\,f \big>=\big<u\, ,\,hf\big>$,
the derivative $Du = u'$ is defined by $\big<u'\,,\,f\big> = -\big<u\, ,\,f'\big>$ ; so also is  $(fu)'= fu'+f'u$. We close this section with the following definition.
\begin{definition} A function $g$, not identically zero, locally integrable with rapid decay is said to be a representing of the null functional if
\begin{align}
\left<0\, ,\, f\right>=\int_{-\infty}^{+\infty}\!\!f(x) {g}(x)dx ,\; \forall f\in {{\mathscr P}}.\label{eq:2.32}
\end{align}
In other words, such a function generates $0$-moments, namely,
\begin{align}
 \int_{-\infty}^{+\infty}\!\! x^n {g}(x)dx=0, \ n\geqslant 0.\label{eq:2.33}
 \end{align}
\end{definition}

\section{Functional Equation}

We are interested exclusively in the application of the characterization property  of the classical 2-OPS by means of the matrix  differential equation satisfied by the vector ${\bf U}$ to derive the systems involving the functionals $u_0$ and $u_1$. For this, we start with the following results.
\begin{lemma}{\cite{2}}
Let  $\{P_n\}_{n \geqslant 0}$ be a sequence of monic polynomials and let $\{u_n\}_{n \geqslant 0}$ be its  associated  dual sequence.
For any linear functional $u$ and integer $ m \geqslant 1$, the following statements are equivalent:
\begin{align*}
&{\rm (i)} \ \ \big< u \, ,\, P_{m-1} \big> \not = 0 \ ;\  \big< u \, ,\, P_{n} \big> = 0 ,\; n\geqslant m\ ;\\
&{\rm (ii)}\ \  \exists \ \lambda_\nu \in \mathbb{C} ,\ 0 \leqslant \nu \leqslant m-1 , \ \lambda_{m-1}\not= 0 , \
\mbox{ such that } u = \sum^{m-1}_{\nu = 0} \lambda_\nu {\it u}_\nu .\hskip 4.5cm
\end{align*}
 \end{lemma}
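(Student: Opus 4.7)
The plan is to prove the two implications separately, relying only on the duality relations $\bigl<u_n\,,\,P_m\bigr>=\delta_{nm}$ and on the fact that $\{P_n\}_{n\geqslant 0}$, having one polynomial of each degree, is a linear basis of $\mathscr{P}$.

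For the implication (ii) $\Rightarrow$ (i), I would simply substitute the assumed decomposition $u=\sum_{\nu=0}^{m-1}\lambda_\nu u_\nu$ into $\bigl<u\,,\,P_n\bigr>$ and apply the duality relation termwise. This gives $\bigl<u\,,\,P_n\bigr>=\sum_{\nu=0}^{m-1}\lambda_\nu\,\delta_{\nu n}$, which vanishes as soon as $n\geqslant m$, and which equals $\lambda_{m-1}\neq 0$ for $n=m-1$. That closes the easy direction.

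For the converse (i) $\Rightarrow$ (ii), the natural candidate is the functional
\[
\tilde u := \sum_{\nu=0}^{m-1}\bigl<u\,,\,P_\nu\bigr>\,u_\nu ,
\]
which is well-defined as a finite linear combination in $\mathscr{P}'$. I would then compare $u$ and $\tilde u$ on the basis: for $0\leqslant n\leqslant m-1$, duality yields $\bigl<\tilde u\,,\,P_n\bigr>=\bigl<u\,,\,P_n\bigr>$, while for $n\geqslant m$, assumption (i) gives $\bigl<u\,,\,P_n\bigr>=0$ and duality again gives $\bigl<\tilde u\,,\,P_n\bigr>=0$. Since both functionals coincide on the basis $\{P_n\}_{n\geqslant 0}$ of $\mathscr{P}$, they agree on every polynomial by linearity, hence $u=\tilde u$. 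Setting $\lambda_\nu=\bigl<u\,,\,P_\nu\bigr>$ produces the required expansion, and $\lambda_{m-1}=\bigl<u\,,\,P_{m-1}\bigr>\neq 0$ by the first part of (i).

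There is no genuine obstacle here; the only point requiring a moment's care is the legitimacy of defining $\tilde u$ as a finite sum (avoiding any infinite formal series over the whole dual sequence), which is precisely why the hypothesis that $u$ vanishes on $P_n$ for all $n\geqslant m$ is essential: it truncates the would-be expansion $u\sim\sum_{\nu\geqslant 0}\bigl<u\,,\,P_\nu\bigr>u_\nu$ to a finite combination, and only then does pointwise agreement on a basis translate into equality in $\mathscr{P}'$.
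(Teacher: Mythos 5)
Your proof is correct: the easy direction by termwise duality and the converse by comparing $u$ with the finite combination $\tilde u=\sum_{\nu=0}^{m-1}\bigl<u\,,\,P_\nu\bigr>u_\nu$ on the basis $\{P_n\}_{n\geqslant0}$ is exactly the standard argument, which is the one in the cited reference \cite{2}; the paper itself states the lemma without proof. Nothing is missing, and your remark that the vanishing hypothesis for $n\geqslant m$ is what makes $\tilde u$ a legitimate finite sum is precisely the right point of care.
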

Now we can give the fundamental theorem.
\begin{theorem}
For the {\rm 2-OPS} $\{P_n\}_{n\ge 0}$ satisfying the third-order recurrence relation with the coefficients  given by $\eqref{eq:1.3}$-$\eqref{eq:1.5}$,
the associated vector functional ${\bf U} = {^t}(u_0 , u_1 )$ satisfies the following  matrix differential equation
\begin{align}
\big({\bf\Phi U}\big)' + {\bf\Psi U} = 0 ,\label{eq:3.1}
\end{align}
with ${\bf \Phi}$ and ${\bf \Psi}$ are two $2\times2$ polynomial matrices defined by
\begin{align}
{\bf \Phi}(x) = \begin{pmatrix}{\!1}&-\delta_0\\ \varphi(x) &{\, \tau_0}\end{pmatrix}\ ; \
{\bf\Psi}(x) = \begin{pmatrix}0&1\\ \psi(x) & \tau_1\end{pmatrix},\label{eq:3.2}
\end{align}
whose the elements are given by
$$\varphi(x) = -\frac{\delta_1}{\gamma}(x-\beta_0),\, \psi (x) = \frac{1}{\gamma}(x-\beta_0),\, \tau_0 = 1+\frac{\delta_1}{\gamma} \alpha_1,\,
\tau_1 = -\frac{\alpha_1}{\gamma}, \ \delta_0=s+r\ and \ \delta_1=s-r.$$
\end{theorem}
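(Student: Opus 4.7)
The plan is to exploit the relation ${\bf V}={\bf \Phi U}$ hinted at in the Introduction, combined with the differentiation identity \eqref{eq:1.13}. Concretely, I would split the proof into two parts: first expressing $v_0$ and $v_1$ as linear combinations of $u_0,u_1$ (with polynomial coefficients) so as to identify the two rows of ${\bf \Phi}$, and then differentiating these identities and applying \eqref{eq:1.13} to recover the matrix equation.

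First I would invert \eqref{eq:1.12} via duality. Writing $\langle v_k,P_n\rangle = \langle v_k,Q_n\rangle - n\delta_{n+1}\langle v_k,Q_{n-1}\rangle = \delta_{k,n} - n\delta_{n+1}\delta_{k,n-1}$ shows that the only indices $n$ for which $\langle v_k,P_n\rangle\ne0$ are $n=k$ and $n=k+1$. Lemma~3.1 then gives immediately
\begin{align*}
v_k = u_k - (k+1)\,\delta_{k+2}\,u_{k+1},\qquad k\geqslant 0.
\end{align*}
Specialising to $k=0$ and $k=1$ (and recalling that $\delta_{k+2}=\delta_k$ since $\varepsilon_{k+2}=\varepsilon_k$, so $\delta_2=\delta_0$ and $\delta_3=\delta_1$) yields $v_0=u_0-\delta_0\,u_1$ and $v_1=u_1-2\delta_1\,u_2$. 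The coefficient $u_2$ is eliminated using the recurrence \eqref{eq:1.14} at $n=0$: since $\gamma_1=2\gamma$, one has $2u_2=\gamma^{-1}\big[(x-\beta_0)u_0-\alpha_1 u_1\big]$. Substituting into $v_1$ produces $v_1 = -\tfrac{\delta_1}{\gamma}(x-\beta_0)u_0 + \Bigl(1+\tfrac{\delta_1}{\gamma}\alpha_1\Bigr)u_1 = \varphi(x)u_0 + \tau_0 u_1$, which is exactly the second row of ${\bf V}={\bf \Phi U}$.

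Having established ${\bf V}={\bf \Phi U}$, I would differentiate and invoke \eqref{eq:1.13} to replace $v_0',v_1'$. For the first component this gives $(u_0-\delta_0 u_1)'=v_0'=-u_1$, which is the first row of $({\bf \Phi U})'+{\bf \Psi U}=0$ with the first row of ${\bf \Psi}$ being $(0,1)$. For the second component, $v_1'=-2u_2$, and using once more $2u_2=\gamma^{-1}[(x-\beta_0)u_0-\alpha_1 u_1]=\psi(x)u_0+\tau_1 u_1$ yields $(\varphi u_0+\tau_0 u_1)'+\psi(x)u_0+\tau_1 u_1=0$, which is the second row of \eqref{eq:3.1}.

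The only non-routine step is the expansion $v_k=u_k-(k+1)\delta_{k+2}u_{k+1}$: it requires careful use of duality on the explicit relation \eqref{eq:1.12}, together with the parity identity $\delta_{k+2}=\delta_k$ to collapse the coefficients to $\delta_0$ and $\delta_1$. After that, the rest is algebraic bookkeeping using one instance of the dual recurrence \eqref{eq:1.14} and one instance of \eqref{eq:1.13} for each row. No information on the explicit form of $\beta_n$, $\alpha_n$, $\gamma_n$ with $n\geqslant 2$ is needed — only $\beta_0$, $\alpha_1$ and $\gamma_1=2\gamma$ enter, which matches the coefficients appearing in ${\bf \Phi}$ and ${\bf \Psi}$.
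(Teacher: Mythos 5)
Your proposal is correct and follows essentially the same route as the paper: identify $v_0=u_0-\delta_0u_1$ and $v_1=u_1-2\delta_1u_2$ via Lemma~3.1, eliminate $u_2$ with \eqref{eq:1.14} at $n=0$ (so $\gamma_1=2\gamma$), and use \eqref{eq:1.13} at $n=0,1$ to pass from ${\bf V}={\bf\Phi U}$ to \eqref{eq:3.1}. The only cosmetic difference is that you derive the moments $\big<v_k,P_n\big>$ explicitly from \eqref{eq:1.12} and obtain the matrix equation directly, whereas the paper states these moments in \eqref{eq:3.3}--\eqref{eq:3.4} and invokes the characterization theorem of classical $d$-orthogonal polynomials for the form of \eqref{eq:3.1}, then determines the entries by the same computation.
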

   \begin{proof}
   The proof of Equation \eqref{eq:3.1} is adapted from the characterization theorem \cite[Th. 3.1]{4} which in turn is based on an extension of the Hahn property \cite{16} to define the classical $d$-orthogonal polynomials. It only remains to determine the elements of the two matrices. To do this, we first proceed with
the action of the elements of the dual sequence $\{{v}_n\}_{n\ge 0}$ over the sequence $\{P_n\}_{n\ge 0}$. The action of both ${v}_0$ and ${v}_1$ over $P_n,\ n=0, 1, \ldots$ is particularly important which gives, respectively,
 \begin{align}
&\big< {v}_0\, ,\, P_0\big> = 1 \ ,\,\big< {v}_0\, ,\, P_1\big> = -\delta_0\ \; \hbox{and}\,\ \big< {v}_0\, ,\, P_n\big> = 0\,\; n\geqslant 2,\label{eq:3.3}\\
&\big< {v}_1\, ,\, P_0\big> = 0 \ ,\,\big< {v}_1\, ,\, P_1\big> = 1\ ,\,\big< {v}_1\, ,\, P_2\big> = -2\delta_1\ \; \mbox{and}\,\
\big< {v}_1\, ,\, P_n\big> = 0\ , \; n\geqslant 3.\label{eq:3.4}
\end{align}
By applying Lemma 3.1, taking into consideration \eqref{eq:3.3} and \eqref{eq:3.4}, one easily gets
 $$v_0 = u_0 -\delta_0 u_1\ \ \mbox{and}\ \ v_1 = u_1-2\delta_1u_2.$$
 Furthermore, setting $n=0$ in \eqref{eq:1.14} permits us to express  $u_2$ in terms of the couple $u_1$ and $u_0$ as
  \begin{align}
  \gamma_1u_2 = (x-\beta_0)u_0 -\alpha_1 u_1\quad (\mbox{with}\ \gamma_1=2\gamma).\label{eq:3.5}
  \end{align}
  In consequence, if we replace $u_2$ by its expression given in \eqref{eq:3.5}, one arrives at
  \begin{align*}
{v_0} &= u_0 -\delta_0 u_1,\\
{v_1} &= -\frac{\delta_1}{\gamma}(x-\beta_0) u_0 +\big(1+ \frac{\delta_1}{\gamma}\alpha_1\big)u_1,
 \end{align*}
or, equivalently, in the matrix form
\begin{align}
\begin{pmatrix}v_0\\ v_1
\end{pmatrix} =\begin{pmatrix} 1&-\delta_0\\ -\frac{\delta_1}{\gamma} (x-\beta_0)& 1+ \frac{\delta_1}{\gamma}\alpha_1\end{pmatrix}
\begin{pmatrix}u_0\\ u_1\end{pmatrix}.\label{eq:3.6}
\end{align}
From this it follows immediately that $\mathbf V=\mathbf\Phi\mathbf U$. If we set now $n=0$ and $n=1$ in  \eqref{eq:1.13}, we get $v_0'=-u_1$ and $v_1'=-2u_2$.
 After that, we use  \eqref{eq:3.5} to replace $u_2$ by its expression in the latter identity.\\
We differentiate  \eqref{eq:3.5} once, having regard to the foregoing, to straightly deduce that
\begin{align}
{\bf\Phi}(x)=\begin{pmatrix} 1&-\delta_0\\-\frac{\delta_1}{\gamma}(x-\beta_0)& 1+\frac{\delta_1}{\gamma}\alpha_1\end{pmatrix}\;
\hbox{and}\;\;
{\bf\Psi}(x)=\begin{pmatrix}0&1\\ \frac{1}{\gamma}(x-\beta_0)& -\frac{\alpha_1}{\gamma}\end{pmatrix}.\label{eq:3.7}
\end{align}
This clearly shows that the two matrices are nonsingular.
\end{proof}
\noindent As a direct consequence of this theorem we have the following corollary.
\begin{corollary}
Let $\eta:=2r\alpha_1 -\gamma$ and $\delta_0$, $\delta_1$  as in {\rm Theorem 3.1}. Then the functionals $u_0$ and $u_1$ necessarily satisfy one of the three systems.\\
{\rm 1.} For $\eta\ne0$ and $\delta_0\ne0$, we have
\begin{align}
 &[\phi(x)u_0]''+[\vartheta(x) u_0]'+ \chi(x) u_0 = 0,\label{eq:3.8} \\
 & \eta u_1=[\phi(x)u_0]' +\varrho(x)u_0.\label{eq:3.9}
  \end{align}
{\rm 2.} For $\eta\ne0$ and $\delta_0=0$, we have
\begin{align}
&\eta u_0''-\big[\vartheta(x)u_0\big]'-\chi(x)u_0=0,\label{eq:3.10}\\
&u_1=-u_0'.\label{eq:3.11}
\end{align}
{\rm 3.} If  $\eta=0$ and $\delta_0\ne0$, then
 \begin{align}
&\sigma(x)u_0' +\tau(x)u_0 =0,\label{eq:3.12}\\
&\delta_0 u_1'-u_1=u_0'.\label{eq:3.13}
\end{align}
The coefficients $\phi, \vartheta, \chi, \varrho, \sigma$ and  $\tau$ are polynomials of degree less than or equal to $1:$
\begin{align}
\phi(x)&=-\delta_1\delta_0(x-\beta_0)+\delta_1\alpha_1 +{\gamma}={\gamma}\det\big({\bf\Phi}(x)\big),\label{eq:3.14}\\
\vartheta(x)&=2s(x-\beta_0)-\alpha_1 ,\ \chi(x)=-x+\beta_0,\ \varrho(x)=\delta_0(x-\beta_0),\label{eq:3.15}\\
\sigma(x)&=\delta_1(x-\beta_0)-\alpha_1 ,\ \tau(x)=-x+\beta_0+\delta_1.\label{eq:3.16}
\end{align}
\end{corollary}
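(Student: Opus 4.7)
The plan is to expand the matrix differential equation \eqref{eq:3.1} row by row and decouple $u_0$ from $u_1$ by eliminating $u_1'$. Writing out \eqref{eq:3.1} with the explicit $\mathbf{\Phi}$ and $\mathbf{\Psi}$ of \eqref{eq:3.2}, the first row gives
$$
(u_0 - \delta_0 u_1)' + u_1 = 0, \qquad\text{i.e.,}\qquad u_0' + u_1 = \delta_0 u_1',
$$
which I will call equation $(\mathrm{I})$, while the second row, after expanding the derivative, gives
$$
(\mathrm{II}):\quad \varphi u_0' + (\varphi' + \psi)u_0 + \tau_0 u_1' + \tau_1 u_1 = 0.
$$

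The main calculation is the elimination of $u_1'$. Multiplying $(\mathrm{II})$ by $\delta_0$ and substituting $\delta_0 u_1' = u_0' + u_1$ from $(\mathrm{I})$, then multiplying through by $\gamma$, the easy identities
$$
\gamma(\delta_0\varphi + \tau_0) = \phi(x), \quad \gamma(\tau_0 + \delta_0\tau_1) = \gamma + (\delta_1-\delta_0)\alpha_1 = -\eta, \quad \gamma\delta_0(\varphi'+\psi) = \delta_0(x-\beta_0-\delta_1),
$$
together with the observation that $\phi'(x) = -\delta_0\delta_1$, so that $\phi u_0' + \delta_0(x-\beta_0-\delta_1)u_0 = [\phi u_0]' + \varrho u_0$, produce the master relation
$$
(\star):\qquad \eta u_1 = [\phi u_0]' + \varrho u_0.
$$
This single relation is the common parent of all three cases.

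Now I would split on $(\eta,\delta_0)$. In Case~1 ($\eta\ne 0$, $\delta_0\ne 0$), equation $(\star)$ is exactly \eqref{eq:3.9}; differentiating it yields $\eta u_1' = [\phi u_0]'' + [\varrho u_0]'$, and combining with $(\mathrm{I})$ rewritten as $\delta_0\eta u_1' = \eta u_0' + \eta u_1 = \eta u_0' + [\phi u_0]' + \varrho u_0$ eliminates $u_1$ and $u_1'$; after division by $\delta_0$ one arrives at \eqref{eq:3.8}. The coefficient match rests on $\delta_0+\delta_1 = 2s$ (giving $\vartheta = \varrho - (\phi+\eta)/\delta_0 = 2s(x-\beta_0)-\alpha_1$) and on $\delta_1+2r=\delta_0$ (giving $\vartheta' + \chi = 2s - x + \beta_0$). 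In Case~2 ($\delta_0 = 0$), equation $(\mathrm{I})$ collapses to $u_1 = -u_0'$, which is \eqref{eq:3.11}; substituting into $(\mathrm{II})$, and using the special value $\tau_0 = -\eta/\gamma$ valid when $\delta_0 = 0$, yields \eqref{eq:3.10}. In Case~3 ($\eta = 0$, $\delta_0 \ne 0$), $(\star)$ degenerates to $[\phi u_0]' + \varrho u_0 = 0$; the identity $\phi + \delta_0\sigma = (\delta_1-\delta_0)\alpha_1 + \gamma = -\eta = 0$ gives $\phi = -\delta_0\sigma$, and a short expansion using $\phi' + \varrho = -\delta_0\tau$ turns this into $-\delta_0(\sigma u_0' + \tau u_0) = 0$, i.e.\ \eqref{eq:3.12}, while \eqref{eq:3.13} is just a verbatim rewriting of $(\mathrm{I})$.

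The only real obstacle is algebraic bookkeeping; nothing conceptual is in play beyond the elimination of $u_1'$. The pivotal non-obvious identity that unifies the three cases is $\phi + \delta_0\sigma = -\eta$: it simultaneously collapses $\phi$ into a scalar multiple of $\sigma$ in Case~3 and supplies the cancellation $\gamma(\tau_0+\delta_0\tau_1) = -\eta$ that drives the elimination in Case~1. Once this identity and the two auxiliary relations $\delta_0+\delta_1 = 2s$, $\delta_1+2r = \delta_0$ are at hand, the matching of polynomial coefficients in $\phi,\vartheta,\chi,\varrho,\sigma,\tau$ is entirely routine.
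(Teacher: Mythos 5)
Your proposal is correct and follows essentially the same route as the paper: your equations $(\mathrm{I})$ and $(\mathrm{II})$ are exactly the paper's two scalar relations \eqref{eq:3.18} and \eqref{eq:3.17} (up to the factor $\gamma$), your master relation $(\star)$ is obtained by the same multiply-by-$\delta_0$ elimination and is precisely \eqref{eq:3.9}, and your case analysis, including the pivotal identity $\phi=-\delta_0\sigma-\eta$, mirrors the paper's proof step for step. The only differences are cosmetic bookkeeping (e.g.\ which auxiliary identity you credit for each coefficient match), not substance.
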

   \begin{proof}
According to Theorem 3.1, the pair of functionals $(u_0, u_1)$ satisfies Eq. \eqref{eq:3.1} with the two matrices ${\bf \Phi}(x)$ and ${\bf \Psi}(x)$ are
 given by \eqref{eq:3.7}. Starting from this equation, after some elementary computations based on the formulas given in Subsection 2.6, we readily obtain
\begin{align}
(\delta_1\alpha_1+{\gamma})u_1'-\alpha_1 u_1&=\delta_1[(x-\beta_0)u_0]'-(x-\beta_0)u_0,\label{eq:3.17}\\
\delta_0 u_1'-u_1&=u_0'.\label{eq:3.18}
\end{align}
The form of \eqref{eq:3.18}, which is the same as \eqref{eq:3.13}, clearly depends only on the parameter $\delta_0$. For this reason, we will restrict
 our attention to the conditions imposed on either of the values of $\eta$ and $\delta_0$. Observe  that the polynomial $\phi$ may be rewritten in the form $\phi(x)=-\delta_0\sigma(x)-\eta$ which clearly shows that $\eta$ and $\delta_0$ cannot both zero, since otherwise the matrix ${\bf \Phi}(x)$
lacks the non-singularity property.\\
For $\eta\ne0$ and $\delta_0\ne0$, if we multiply \eqref{eq:3.17} by $\delta_0$ and then combine with \eqref{eq:3.18}, we  get  \eqref{eq:3.9}.
Next, replace  $u_1$ by its expression given by \eqref{eq:3.9} in \eqref{eq:3.18} and then perform some elementary computations, we obtain that $u_0$ satisfies  \eqref{eq:3.8}.\\
 If now $\delta_0=0$, Eq. \eqref{eq:3.18} reduces to \eqref{eq:3.11} and \eqref{eq:3.17} becomes
\begin{align}
\eta u_1'+\alpha_1 u_1=2r[(x-\beta_0)u_0]'+(x-\beta_0)u_0.\label{eq:3.19}
\end{align}
On substituting  \eqref{eq:3.11} into \eqref{eq:3.19}, we easily find  \eqref{eq:3.10} with $\phi(x)=-\eta\ne0$.\\
 Finally,  for $\eta=0$ whenever $\delta_0\ne0$ and on account of \eqref{eq:3.18}, we easily check that the LHS of \eqref{eq:3.17} writes
\begin{align*}
(\delta_1\alpha_1+{\gamma})u_1'-\alpha_1 u_1=\alpha_1u_0',
\end{align*}
from which we readily deduce \eqref{eq:3.12}. To summarize, this allows us to conclude that the pair of functionals $(u_0, u_1)$ necessarily satisfies  one of the three systems  stated in the above assertions.
   \end{proof}

\noindent We close this section with a few remarks on the regularity   of the pair of functionals $u_0$ and $u_1$.
The vector ${\bf U}={^t}(u_0 , u_1 )$ is always assumed to be  regular in conformity with \cite[Def. 1.4]{1} which ensures the existence of the associated 2-OPS.\par

\noindent If we consider the two functionals $u_0$ and $u_1$ separately, the latter is never regular in the sense of the ordinary orthogonality since its first moment is
identically zero.
 Regarding  $u_0$, the question of its regularity cannot be answered in advance, except when it is $2$-symmetrical where we are in a position to assert that it is
 also non regular (see \cite[Rem. 1.2]{1}).
However, for $\eta=0$ and $\delta_0\ne0$, it is easy to see that the functional $u_0$  is   positive-definite.
 Indeed, Eq. \eqref{eq:3.12} shows that  $u_0$ coincides with the classical Hermite functional if  $\delta_1=0$, whilst for $\delta_1\ne0$ we find that $u_0$ coincides
  with the classical Laguerre functional.

\section{Integral Representations}

  The objective now is to seek integral representation for the functionals $u_0$ and $u_1$ in each of the special cases pointed out in \cite{1}. For this purpose,
  we shall assume that there exist two measures $\mu_0$ and $\mu_1$ and some contour ${\mathscr{C}}$ such that
\begin{align}
\big< u_0 \, ,\,f \big> &= \int_\mathscr{C}\!\! f(x) d\mu_0(x) \, ,\ \forall f\in{{\mathscr P}},\label{eq:4.1}\\
\big< u_1 \, ,\, f\big> &= \int_{\mathscr{C}}\!\! f(x)d\mu_1(x)\, ,\ \forall  f\in{{\mathscr P}}.\label{eq:4.2}
\end{align}
The contour of integration  is usually taken in the complex plane, but our goal here is to find integrals evaluated along (an interval of) the real axis.
Three configurations will  be  considered here: $\mathscr{C}= ]-\infty\ ; +\infty[ $, $ \mathscr{C}=[0\ ; +\infty[ $ or $ \mathscr{C}=]-\infty\ ; 0]$.\\
Note that in certain subcases, when ${\mathscr{C}}$ is taken to be the positive (or negative) real axis, addition of a Dirac mass at the origin would be necessary.\\
To illustrate our purpose, and to avoid repetition, we start by an exposition of the outline of  our procedure which we apply  to the first system \eqref{eq:3.8}-\eqref{eq:3.9}. This in fact generates the differential equations satisfied by the investigated weight functions, subject to certain boundary conditions.
The same reasoning is then used to process the other two systems, namely, \eqref{eq:3.10}-\eqref{eq:3.11} and \eqref{eq:3.12}-\eqref{eq:3.13} for which the main results will be successively announced without details. This method is similar to that  used in various works about  the integral representation of the functionals involved in the 2-orthogonality conditions (see for instance \cite{6,7,17,18,19}).\\
To this end,   we define the two polynomials $F$ and $G$ as follows
\begin{subequations} \begin{align}
 \hskip 1cm F(x)&=\phi(x) f''(x)-\vartheta(x) f'(x)+ \chi(x) f(x),\label{eq3a}\\
       G(x)&=-\phi(x) f'(x)+\varrho(x) f(x).\label{eq3b}\hskip 2cm
  \end{align}\end{subequations}
From \eqref{eq:3.8} and \eqref{eq:3.9}, using \eqref{eq3a}-\eqref{eq3b}, the action of both $u_0$ and $u_1$ over   $f$ gives
  \begin{align}
   0=\left<[\phi(x)u_0]''+[\vartheta(x) u_0]'+ \chi(x) u_0\, ,\, f\right>=\left<u_0\, ,\, F\right>,\label{eq:4.4}\\
   \eta\left<u_1\, ,\, f\right>=\left<u_0\, ,\, G\right>.\label{eq:4.5}
 \end{align}
 In what follows, ${g}_0$ and ${g}_1$  denote two functions representing the null functional as defined in \eqref{eq:2.32}-\eqref{eq:2.33};  $\zeta_0$ and $ \zeta_1$ stand for arbitrary constants (possibly zero). Since we have independent systems, we repeat use the same pairs of letters $({g}_0, {g}_1)$  and $(\zeta_0,\, \zeta_1)$ if necessary, when no confusion can arise.
 By the way, we also specify that it is assumed that in the two situations exposed below the weight functions $w_0$ and $w_1$ sought have the desired properties (at least twice differentiable).\par
 \medskip
 \noindent {\bf Situation 1.} There exist two weights $w_0$ and $w_1$ supported on  ${\mathscr{C}}=]-\infty\ ; +\infty[$, such that $d\mu_0(x)=w_0(x)dx$ and  $d\mu_1(x)=w_1(x)dx$. Thus \eqref{eq:4.1}-\eqref{eq:4.2} give rise to
  \begin{align}
\big< u_0 \, ,\,f \big> &= \int_\mathscr{C}\!\! f(x) w_0(x) dx  ,\ f\in{{\mathscr P}},\label{eq:4.6}\\
\big< u_1 \, ,\, f\big> &= \int_{\mathscr{C}} \!\! f(x) w_1(x)dx ,\ f\in{{\mathscr P}}.\label{eq:4.7}
\end{align}
{\bf 1a. } Assertion (1) of Corollary 3.1  states that  the system \eqref{eq:3.8}-\eqref{eq:3.9} occurs if we take $\eta\ne0$ and $\delta_0\ne0$.
Combining \eqref{eq:4.4} with \eqref{eq:4.6} we can   rewrite \eqref{eq:4.4} as
\begin{align*}
\int_\mathscr{C}\!\! F(x) w_0(x)dx =0, \ \forall f\in{{\mathscr P}}.
 \end{align*}
   We first perform integration by parts to get
 \begin{align*}
\begin{aligned}
\left[\phi w_0 f'- ((\phi w_0)' +\vartheta w_0)f\right]_{\mathscr{C}}+
\int_{\mathscr{C}}\!\!\left((\phi w_0)''+ (\vartheta w_0)'+ \chi w_0\right)f dx= 0.
\end{aligned}
\end{align*}
Hence, by imposing  the following two conditions
\begin{align}
\left[\phi w_0 f'-((\phi w_0)'+\vartheta w_0)f \right]_{\mathscr{C}}&=0, \, \forall f \in {{\mathscr P}},\label{eq:4.8}\\
\int_{\mathscr{C}}\!\!\left((\phi w_0)''+ (\vartheta w_0)'+ \chi w_0\right)f dx&= 0, \, \forall f \in {{\mathscr P}},\label{eq:4.9}
\end{align}
 we deduce from the latter equation  that
\begin{align}
\phi w_0''+(\vartheta+2\phi') w_0'+ (\chi+\vartheta') w_0 = \zeta_0 {g}_0.\label{eq:4.10}
\end{align}
 On the other hand,  application of \eqref{eq:4.7} and \eqref{eq:4.6} to \eqref{eq:4.5} readily shows that
  \begin{align}
 \eta w_1=\phi w_0' +(\varrho+\phi')w_0+\zeta_1{g}_1,\label{eq:4.11}
 \end{align}
  provided that $w_0$ satisfies also the additional boundary condition
  \begin{align}
\left[\phi w_0f\right]_{\mathscr{C}}=0 ,\; \forall f\in {{\mathscr P}}.\label{eq:4.12}
 \end{align}
\medskip
  \noindent {\bf 1b. } When $\eta\ne0$ and $\delta_0=0$, the same reasoning applies to  \eqref{eq:3.10}-\eqref{eq:3.11} yields
 \begin{align}
&\eta w_0''-\vartheta w_0'-(\chi+\vartheta')w_0=\zeta_0{g}_0,\label{eq:4.13}\\
&w_1=-w_0'+\zeta_1{g}_1,\label{eq:4.14}
\end{align}
under the conditions
 \begin{align}
&\left[\eta w_0 f'+(\vartheta w_0-\eta w_0')f\right]_{\mathscr{C}}=0 ,\; \forall f\in {{\mathscr P}},\label{eq:4.15}\\
& \left[w_0f\right]_{\mathscr{C}}=0 ,\; \forall f\in {{\mathscr P}}.\label{eq:4.16}
\end{align}
{\bf 1c. } Likewise, under the conditions on $\eta$ and $\delta_0$,  the system \eqref{eq:3.12}-\eqref{eq:3.13} leads to
  \begin{align}
&\sigma w_0'+\tau w_0=\zeta_0{g}_0,\label{eq:4.17}\\
&\delta_0 w_1'-w_1=w_0'+\zeta_1{g}_1,\label{eq:4.18}
 \end{align}
 satisfying the boundary conditions
  \begin{align}
 & \left[\sigma w_0f\right]_{\mathscr{C}}=0 ,\; \forall f\in {{\mathscr P}},\label{eq:4.19}\\
 &\left[(\delta_0w_1- w_0)f\right]_{\mathscr{C}}=0 ,\; \forall f\in {{\mathscr P}}.\label{eq:4.20}
 \end{align}\par
 \medskip
 \noindent {\bf Situation 2.} There exist two weights $w_0$ and $w_1$ supported on  ${\mathscr{C}}= [0\, ; +\infty[ $ $\big($or $]-\infty\, ; 0]\big)$ such that $d\mu_0(x)=\big(w_0(x)+\lambda_0\delta(x)\big)dx $ and $d\mu_1(x)=\big(w_1(x)+\lambda_1\delta(x)\big)dx$, where $\delta(x)$ stands for the standard  Dirac delta function and $\lambda_k $ ($k=0, 1$) are constants (possibly zero). This means that \eqref{eq:4.1}-\eqref{eq:4.2}  may be written as
\begin{align}
\big< u_0 \, ,\,f \big> &= \int_{\mathscr{C}}\!\!  f(x) d\mu_0(x)= \int_{\mathscr{C}}\!\! f(x)w_0(x) dx +\lambda_0 f(0)\, ,\,f\in{{\mathscr P}},\label{eq:4.21}\\
\big< u_1 \, ,\, f\big> &= \int_{\mathscr{C}} f(x) d\mu_1(x)= \int_{\mathscr{C}}\!\! f(x)w_1(x) dx +\lambda_1 f(0)\, ,\, f\in{{\mathscr P}}.\label{eq:4.22}
\end{align}
{\bf 2a.} As in Situation 1,  we start from \eqref{eq:3.8}-\eqref{eq:3.9} and combine \eqref{eq:4.4} with \eqref{eq:4.21} to write
\begin{align*}
\int_{\mathscr{C}} F(x) w_0(x)dx+\lambda_0 F(0) =0, \ \forall f\in{{\mathscr P}}.
 \end{align*}
Using integration by parts we obtain
\begin{align*}
\begin{aligned}
\left[\phi w_0 f'- ((\phi w_0)' +\vartheta w_0)f\right]_{\mathscr{C}}+\lambda_0 F(0)+
\int_{\mathscr{C}}\!\!\left((\phi w_0)''+ (\vartheta w_0)'+ \chi w_0\right)f dx= 0, \ \forall f\in{{\mathscr P}}.
\end{aligned}
\end{align*}
 If we impose the  two conditions
  \begin{align}
\left[\phi w_0 f'-((\phi w_0)'+\vartheta w_0)f \right]_{\mathscr{C}}+\lambda_0 F(0)&=0, \, \forall f \in {{\mathscr P}}, \label{eq:4.23}\\
\int_{\mathscr{C}}\!\!\left((\phi w_0)''+ (\vartheta w_0)'+ \chi w_0\right)f dx&= 0, \, \forall f \in {{\mathscr P}},\label{eq:4.24}
\end{align}
we deduce  from \eqref{eq:4.24} that $w_0$ satisfies Eq. \eqref{eq:4.10}. Further, application of \eqref{eq:4.22} and \eqref{eq:4.21} to \eqref{eq:4.5} gives \eqref{eq:4.11} provided that $w_0$ satisfies the additional condition
  \begin{align}
 \left[\phi w_0f\right]_{\mathscr{C}}+\eta\lambda_1f(0)-\lambda_0G(0)=0 ,\; \forall f\in {{\mathscr P}}.\label{eq:4.25}
 \end{align}
  {\bf 2b.}  Likewise, when $\eta\ne0$ with $\delta_0=0$,  if we start from \eqref{eq:3.10}-\eqref{eq:3.11}, we easily  check  that $w_0$ and $w_1$ satisfy the system \eqref{eq:4.13}-\eqref{eq:4.14}, under the  conditions
\begin{align}
 \left[\eta w_0 f'+(\vartheta w_0-\eta w_0')f\right]_{\mathscr{C}}-\lambda_0F(0)&=0 ,\; \forall f\in {{\mathscr P}},\label{eq:4.26}\\
  \left[w_0f\right]_{\mathscr{C}}+\lambda_0f'(0)-\lambda_1f(0)&=0 ,\; \forall f\in {{\mathscr P}}.\label{eq:4.27}
 \end{align}
{\bf 2c.} For $\eta= 0$ with $\delta_0\ne0$,  the system \eqref{eq:3.12}-\eqref{eq:3.13} readily gives \eqref{eq:4.17}-\eqref{eq:4.18}. This holds provided that the following  conditions being fulfilled
  \begin{align}
  \left[\sigma w_0f\right]_{\mathscr{C}}+\lambda_0\left[\sigma(0)f'(0)+\big(\sigma'(0)-\tau(0)\big)f(0)\right]=0 ,\; \forall f\in {{\mathscr P}},\label{eq:4.28}\\
 \left[(\delta_0w_1- w_0)f\right]_{\mathscr{C}}+(\delta_0\lambda_1-\lambda_0)f'(0)+\lambda_1 f(0)=0 ,\; \forall f\in {{\mathscr P}}.\label{eq:4.29}
 \end{align}
 Here and subsequently, we  constantly   take $\zeta_1=\zeta_0 = 0$, which is always possible to assume. In so doing, depending on the case and under the imposed conditions, we are in effect dealing with one of the following  systems.\\
  \vskip 0.05cm
   $({\cal S}_{1})\begin{cases}
 \phi w_0''+(\vartheta+2\phi') w_0'+ (\chi+\vartheta') w_0 = 0,\\
 \eta w_1=\phi w_0' +(\varrho+\phi')w_0.
 \end{cases} $\par
 \medskip
 $({\cal S}_{2})\begin{cases}
 \eta w_0''-\vartheta w_0'- (\chi+\vartheta') w_0 = 0,\\
  w_1=- w_0'.
 \end{cases}$\par
  \medskip
  $({\cal S}_{3})\begin{cases}
 \sigma w_0' +\tau w_0= 0,\\
 \delta_0 w_1'-w_1=w_0'.
 \end{cases}$\par
 \medskip

 \noindent  Since the polynomials $\phi, \vartheta$ and $\chi$ are of degree less than or equal to $1$, the homogeneous second order differential equations in both $({\cal S}_{1})$ and $({\cal S}_{2})$ are of type  Laplace which in certain cases  provide  solutions  given in terms of  special functions. On the other hand, since  $\deg\big(\sigma(x)\big)\leqslant1$ and $\deg\big(\tau(x)\big)=1$, we can assert  that the weight function $w_0$  in $({\cal S}_{3})$ is either  Laguerre  or  Hermite's one. \\
Note that using two different approaches, a variety of pair of orthogonality weights were given in various papers dedicated to this subject (see, e.g. \cite{6,7,8,9,17,18,19}).\\
 In this sense, an interesting example treated by Loureiro and Van Assche in \cite{17} deserves special attention. In fact, the authors obtained weight functions supported on a path where the role of the interval is replaced by a starlike set with three rays. More precisely, they gave weights via Airy function and its derivative supported on a starlike set ${\mathscr{S}}$ defined by ${\mathscr{S}}:={\mathscr{S}}_0\cup{\mathscr{S}}_1\cup{\mathscr{S}}_2$, with ${\mathscr{S}}_0$ corresponds to the positive real axis, ${\mathscr{S}}_1$ is the straight line starting  at $e^{2i{\pi}/3}\infty$ and ending at the origin, and ${\mathscr{S}}_2$ is the straight line starting  at $e^{4i{\pi}/3}\infty$ and ending at the origin. ${\mathscr{S}}_2$ and ${\mathscr{S}}_3$ are known in optics as Stokes' lines considered by himself to establish the reality of his phenomenon when he was studying the Airy integral (see \cite{20} and the references therein).
Except in this example (also encountered here), we are mainly concerned in this paper with orthogonality weights supported on (an interval of) the real axis. This often requires some restrictions on the free parameters.\\
Having disposed of this preliminary step, the task is now to find the suitable pair of functions $w_0$ and $w_1$ solution to the system involving  in each  special case.

\subsection{The special cases}

  To put into practice any system,  we first explicit the polynomial coefficients arising in the differential equations satisfied by
  $w_0$ and $w_1$.  As we have seen, the parameters $\eta$ and $\delta_0$  play an important role in the determination of the  system to examine.\par
    \medskip

 \noindent{\bf I.} For $\delta_1=\delta_0=0 \Leftrightarrow s=r=0$, which yields $\eta= -{\gamma}\ne0$, the polynomial coefficients are given by
 $\ \phi(x)={\gamma},\, \vartheta(x)=-\alpha_1 ,\, \chi(x)=-(x-\beta_0),\,\varrho(x)=0$.\\
Under the  two conditions $\eta\ne0$ and $\delta_0=0$, Assertion (2) of Corollary 3.3 shows that the appropriate system to consider is therefore $({\cal S}_{2})$.\par
\noindent   Accordingly, we have to solve the following equations
\begin{align}
&{\gamma}w_0''- \alpha_1 w_0'- (x-\beta_0) w_0 = 0,\label{eq:4.30}\\
 &w_1 = -w_0',\label{eq:4.31}
\end{align}
under the boundary conditions
\begin{subequations}\begin{align}
&\left[\gamma w_0 f'+(\alpha_1 w_0-\gamma w_0')f\right]_{\mathscr{C}}=0 ,\; \forall f\in {{\mathscr P}},\label{eq32a}\\
& \left[w_0f\right]_{\mathscr{C}}=0 ,\; \forall f\in {{\mathscr P}}.\label{eq32b}
\end{align}\end{subequations}
By choosing specific values of the parameters $\gamma$, $ \alpha_1$ and $\beta_0$, three subcases deserve to be discussed here.
It goes without saying that Eq. \eqref{eq:4.31} remains  unchanged. \\
 \medskip
\noindent {\bf - Subcase I.1}  Fix $\gamma=1$ and set $\alpha_1:=2\alpha >0$, $\beta_0 =0$, the first equation writes
\begin{align}
w_0''- 2\alpha w_0'- x w_0 = 0. \label{eq:4.33}
\end{align}
\begin{proposition}
  The two functionals $u_0$ and $u_1$ admit the integral representations \eqref{eq:4.6}-\eqref{eq:4.7}, with
  \begin{align*}
w_0(x)=e^{\frac{2}{3}\alpha^3}e^{\alpha x}{\rm Ai}(x+\alpha^2)\quad \mbox{and}\quad
w_1(x)=-e^{\frac{2}{3}\alpha^3}\left(e^{\alpha x}{\rm Ai}(x+\alpha^2)\right)'\, ,\; \alpha>0\ ,\; x\in\mathbb{R},
\end{align*}
subject to \eqref{eq32a}-\eqref{eq32b}, where ${\rm Ai}$ is the Airy function of the first kind.
\end{proposition}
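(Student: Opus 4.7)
The plan is to verify that the pair $(w_0, w_1)$ displayed in the proposition satisfies the system $({\cal S}_2)$ specialised to \eqref{eq:4.30}-\eqref{eq:4.31} together with the boundary conditions of Situation~1, so that integration against $w_0$ and $w_1$ realises $u_0$ and $u_1$ via \eqref{eq:4.6}-\eqref{eq:4.7}. Since the functional equation \eqref{eq:3.10} determines $u_0$ up to normalisation, it suffices to exhibit an admissible weight and then fix the constant in front of it.

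First, I would solve the second-order equation \eqref{eq:4.33} directly. Substituting $w_0(x) = C\, e^{\alpha x}\, y(x)$ eliminates the first-order term and reduces the ODE to $y''(x) - (x + \alpha^2)\, y(x) = 0$, which is exactly the Airy-type equation \eqref{eq:2.12}. Its general solution is $y(x) = c_1 {\rm Ai}(x+\alpha^2) + c_2 {\rm Bi}(x+\alpha^2)$. The component ${\rm Bi}$ must be discarded because ${\rm Bi}(t)$ grows super-exponentially as $t\to +\infty$, which outpaces the factor $e^{\alpha x}$, while the boundary condition \eqref{eq32b} demands that $w_0$ decay faster than any polynomial at $+\infty$. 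This leaves $w_0(x) = C\, e^{\alpha x} {\rm Ai}(x+\alpha^2)$.

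Next I would check the boundary conditions at both ends. As $x\to +\infty$, the asymptotic formulas \eqref{eq:2.13} for ${\rm Ai}$ and ${\rm Ai}'$ show that $w_0(x)$ and $w_0'(x)$ decay super-exponentially, so $x^n w_0(x)\to 0$ and $x^n w_0'(x)\to 0$ for every $n\geqslant 0$. As $x\to -\infty$, writing $x=-t$ with $t\to +\infty$, formula \eqref{eq:2.14} gives $|{\rm Ai}(\alpha^2 - t)| = O(t^{-1/4})$ and $|{\rm Ai}'(\alpha^2 - t)| = O(t^{1/4})$; the factor $e^{-\alpha t}$ with $\alpha>0$ then produces exponential decay that swamps every polynomial multiple. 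This is precisely where the hypothesis $\alpha>0$ intervenes and is the main obstacle: without this positivity the boundary terms in \eqref{eq32a}-\eqref{eq32b} could not be discarded and the real-line integral representation would break down. Once these boundary conditions are validated, integrating by parts the action of $w_0$ against any $f\in\mathscr{P}$ reproduces \eqref{eq:3.10}, so the functional generated by $w_0$ satisfies the same equation as $u_0$ and hence coincides with it up to a scalar multiple.

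Finally I would fix the normalisation $C = e^{\frac{2}{3}\alpha^3}$. By identity \eqref{eq:2.11} this amounts to $w_0(x) = {\rm GAi}(x,\alpha,1)$, and setting $n=0$, $x=0$ in the integral formula \eqref{eq:2.9} for the three-variable Hermite polynomials yields
$$\int_{-\infty}^{+\infty}\! w_0(x)\, dx = {_3}H_0(0,\alpha,1) = 1,$$
matching the normalisation $\langle u_0,1\rangle = \langle u_0, P_0\rangle = 1$. The definition $w_1 = -w_0'$ then realises \eqref{eq:4.31} and, through \eqref{eq:3.11}, the desired integral representation of $u_1$, completing the proof.
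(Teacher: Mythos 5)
Your proposal is correct and follows essentially the same route as the paper: transform $w_0=e^{\alpha x}W$ to reduce \eqref{eq:4.33} to the Airy-type equation \eqref{eq:2.12}, discard the ${\rm Bi}$ component, verify the boundary conditions \eqref{eq32a}--\eqref{eq32b} at $\pm\infty$ through the asymptotics \eqref{eq:2.13}--\eqref{eq:2.14} (with $\alpha>0$ handling $x\to-\infty$), and set $w_1=-w_0'$. The only cosmetic difference is the normalisation: the paper evaluates $\int_{-\infty}^{+\infty}e^{\alpha x}{\rm Ai}(x+\alpha^2)\,dx=e^{-\frac{2}{3}\alpha^3}$ directly via the Laplace transform of ${\rm Ai}$, whereas you invoke $w_0={\rm GAi}(x,\alpha,1)$ and ${_3}H_0(0,\alpha,1)=1$; this is equivalent, but note that ${_3}H_0\equiv1$ comes from the generating-function definition in the cited reference rather than from \eqref{eq:2.9} itself, so quoting that fact (or just computing the integral as the paper does) is needed to avoid circularity.
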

   \begin{proof}
We first perform the transformation $w_0(x)=e^{\alpha x}W(x)$ in \eqref{eq:4.33}, to deduce that the function $W$ satisfies the Airy-type differential
 equation \eqref{eq:2.12} whose the general solution  is
\begin{align*}
W(x)= c_1{\rm Ai}(x+\alpha^2)+c_2{\rm Bi}(x+\alpha^2),\ \ x\in \mathbb{R},
\end{align*}
where $c_1$ and $c_2$ are arbitrary constants. Since ${\rm Ai}(x)$ decays and ${\rm Bi}(x)$ grows  as $x\to+\infty$, taking $c_2=0$, a solution of Eq.
\eqref{eq:4.33} satisfying \eqref{eq32a}-\eqref{eq32b} can simply be written as
\begin{align*}
w_0(x)=c_1e^{\alpha x}{\rm Ai}(x+\alpha^2), \ \ x\in \mathbb{R}.
\end{align*}
The constant $c_1$  is determined  by means  of the normalization condition, that is,
\begin{align*}
1=\left<u_0\, ,\, 1 \right>=\int_{-\infty}^{+\infty}\!\! w_0(x) dx= c_1 \int_{-\infty}^{+\infty}\!\!e^{\alpha x}{\rm Ai}\left(x+\alpha^2\right)dx.
\end{align*}
Thanks to Laplace's transform of the Airy function  \cite[Sec. 9]{5}, we find that the value for the integral in the right is given by
\begin{align*}
\int_{-\infty}^{+\infty}\!\!e^{\alpha x}{\rm Ai}\left(x+\alpha^2\right)dx=e^{-\alpha^3}\int_{-\infty}^{+\infty}\!\!e^{\alpha t}{\rm Ai}(t)dt
 = e^{-\alpha^3}e^{\frac{1}{3}\alpha^3}=e^{-\frac{2}{3}\alpha^3}.
\end{align*}
It turns out that $c_1=e^{\frac{2}{3}\alpha^3}$, and so the weight function $w_0$ writes
\begin{align}
w_0(x)= e^{\alpha (x+\frac{2}{3}\alpha^2)}{\rm Ai}(x+\alpha^2)\, ,\; \alpha>0\ ,\; x\in\mathbb{R}.\label{eq:4.34}
\end{align}
From \eqref{eq:4.31} it is straightforward that the second weight function  is given by
\begin{align}
w_1(x)=-e^{\frac{2}{3}\alpha^3}\Big(e^{\alpha x}{\rm Ai}(x+\alpha^2)\Big)'\, ,\; \alpha>0\ ,\; x\in\mathbb{R}.\label{eq:4.35}
\end{align}
Now, we must check that the  two vanishing conditions \eqref{eq32a}-\eqref{eq32b} are meet at the endpoints of the real line. \\
As $x\to+\infty$, use of $\eqref{eq:2.13}$  successively gives
\begin{align*}
e^{\alpha x}\!{\rm Ai}(x+\alpha^2)\sim {\frac{e^{-\frac{2}{3}{(x+\alpha^2)}^{\frac{3}{2}}+\alpha x}}{2\sqrt{\pi}{(x+\alpha^2)}^\frac{1}{4}}}\to 0\ \hbox{and}\
e^{\alpha x}\!{\rm Ai}'({x+\alpha^2})\sim -{\frac{e^{-\frac{2}{3}{(x+\alpha^2)}^{\frac{3}{2}}+\alpha x}}{2\sqrt{\pi}{(x+\alpha^2)}^{-\frac{1}{4}}}}\to 0.
\end{align*}
 On the other hand,  for large negative argument,  $\eqref{eq:2.14}$ readily gives
 \begin{align*}
e^{\alpha x}{\rm Ai}\left(x+\alpha^2\right)\to 0\quad\hbox{and}\quad e^{\alpha x}{\rm Ai}'\left(x+\alpha^2\right)\to 0, \quad \mbox{as}\ x\to-\infty.
\end{align*}
In the light of these last results, 
a trivial verification shows that the boundary conditions \eqref{eq32a}-\eqref{eq32b} are fulfilled which completes the proof.
   \end{proof}
\noindent {\bf Remarks 4.1.}
{\bf (a)} In \eqref{eq:4.34}, if we change $x\to-x$ and set $\alpha=t$, we recover a solution of the heat equation $\partial_tu(x,t)=\partial_{xx}u(x,t)$ expressed in terms of Airy function as
$$u(x,t)= e^{\frac{2}{3}t^3 -xt}{\rm Ai}(t^2-x).$$
This function appears in different domains of sciences. For more details see \cite{10}.\\
{\bf (b)} Using \eqref{eq:2.4}, we find that $w_0$ can be expressed in terms of  Macdonald's function in the form
\begin{align*}
w_0(x)=\frac{e^{\frac{2}{3}\alpha^3}}{\pi{\sqrt{3}}}e^{\alpha x}{(x+\alpha^2)^\frac{1}{2}}K_\frac{1}{3}\Big(\frac{2}{3}(x+\alpha^2)^\frac{3}{2}\Big),\; \alpha>0 ,\;
x\in\mathbb{R}.
\end{align*}
{\bf (c)} Likewise, due to \eqref{eq:2.8}, $w_0$ can simply be written in terms of the Gauss-Airy function as follows
\begin{align*}
w_0(x)={\rm GAi}(x , \alpha , 1) ,\; \alpha>0 ,\; x\in\mathbb{R}.
\end{align*}
Based on this last identity, taking into account \eqref{eq:2.9}, we deduce that the moments of $w_0(x)$ can be expressed as particular values  of the three-variable Hermite polynomials, that is,
 \begin{align*}
\int_{-\infty}^{+\infty}\! t^nw_0(t)dt=\int_{-\infty}^{+\infty}\! t^n{\rm GAi}(t,\alpha,1)dt={_3}H_n(0,\alpha,1) ,\ \ n\geqslant 0.
\end{align*}
\noindent {\bf - Subcase I.2}  For $\gamma=1$, $\beta_0=0$ and $\alpha_1=0$ in \eqref{eq:4.30}, we see that  $w_0$ satisfies  Airy's Equation
\begin{align}
 w_0''- x w_0 = 0.\label{eq:4.36}
\end{align}
  This is precisely the subcase treated in \cite[Prop. 3.9]{17} where the authors gave  two weight functions  supported on  the starlike set ${\mathscr{S}}$.
   This pair of weights is simply Airy function and its first derivative (see the above reference for further details)
  \begin{align*}
  w_0(x)={\rm Ai}(x)\quad \hbox{and}\quad w_1(x)=-{\rm Ai}'(x).
  \end{align*}
It is also shown that the vanishing conditions are fully met at the endpoints of ${\mathscr{S}}$.\par
\noindent Let  the sequence ${(u_k)}_n;  n=0, 1, 2, \ldots $   stands for the moments  of  $u_k, \;k\in \{0, 1\}$.\\
 On applying the results given \cite[Corollary 3.8]{21} stating the moments of the $d$-symmetric functionals, adapted  for $d=2$,  we obtain
\begin{align*}
  {(u_0)}_{3n}&=\frac{(3n)!}{3^n(n!)} \qquad\mbox{and}\quad {(u_0)}_{3n+1}={(u_0)}_{3n+2}=0, \ n\geqslant 0,\hskip 2cm\\
 \hskip 2cm {(u_1)}_{3n+1}&=\frac{(3n+1)!}{3^n(n!)} \quad\mbox{and}\quad {(u_1)}_{3n}={(u_1)}_{3n+2}=0, \ n\geqslant 0.
\end{align*}
Similar results have been also given in \cite{17}.\\
\noindent {\bf- Subcase I.3}  Likewise, if we set $\gamma={1/9}$ and take
 $\beta_0=0$, $\alpha_1=0$ in \eqref{eq:4.30}, we readily see that $w_0$ satisfies
 \begin{align}
 w_0''- 9x w_0 = 0.\label{eq:4.37}
 \end{align}
 This last equation has been considered in the complex $z$-plane by Stokes in his first work on the so-called Stokes phenomenon problem published in \cite{22}.
 He gave two independent  power series solutions of this equation, which for small $|z|$ could be used in the computation of the general solution.\\
Similarly to {Subcase I.2}, due to {\eqref{eq:2.7}}, we conclude that
 \begin{align*}
w_0(x)= 3^{\frac{2}{3}}{\rm Ai}(3^{\frac{2}{3}}x)\quad \hbox{and}\quad w_1(x)= -3^{\frac{4}{3}}{\rm Ai}'(3^{\frac{2}{3}}x).
\end{align*}
The 2-OPS $\{P_n\}_{n\geqslant0}$ associated with  these pair of weights are  also 2-symmetric. They are  related to  the hypergeometric polynomials
${_1}F_2\big(\!-n\, ; 1+\alpha\, ,1+\beta\, ; x\!\big), n\geqslant0$, $\alpha, \beta>-1$, studied in detail in \cite{7} and denoted therein $B_n^{\alpha,\beta}(x)$.
 To be more precise,  let ${\hat B}_n^{\alpha,\beta}(x)$ denotes the monic polynomial corresponding to $B_n^{\alpha,\beta}(x)$. This enables us to write
 ${\hat B}_n^{\alpha,\beta}(x)=(-1)^n(1+\alpha)_n(1+\beta)_n B_n^{\alpha,\beta}(x)$.\\
 Application of the cubic decomposition of the $2$-symmetric sequence $\{P_n\}_{n\geqslant0}$ gives
\begin{align*}
P_{3n}(x)={\hat B}^{{-\frac{2}{3}},{-\frac{1}{3}}}_{n}(x^3)\;  ;\;P_{3n+1}(x)=x{\hat B}^{{-\frac{1}{3}},\frac{1}{3}}_{n}(x^3)\;  ;\;
P_{3n+2}(x)=x^2{\hat B}^{\frac{1}{3},\frac{2}{3}}_{n}(x^3).
\end{align*}
It is also shown  that   $\{{\hat B}_n^{\alpha,\beta}\}_{n\geqslant0}$  is a classical 2-OPS associated with the weights
$w^{\alpha,\beta}_0(x)=\lambda_1 x^\beta\rho_\nu(x)$ and $w^{\alpha,\beta}_1(x)=-\lambda_2\left(x^{\beta+1}\rho_\nu(x)\right)'$,  $x\geqslant0$,
$\alpha\geqslant\beta >-1$, $\nu=\alpha-\beta $, and $\lambda_k:=\lambda_k(\alpha,\beta)=1/{\Gamma(k+\alpha)\Gamma(k+\beta)},\ k\in \{1, 2\}$, where $\rho_\nu(x)$
is the scaled Macdonald function defined by \eqref{eq:2.2}.
 Mention finally that similar results have been obtained independently in \cite{9}, where the authors considered the orthogonality conditions  w.r.t. the system of two positive weight functions $\left(x^\beta\rho_\nu(x) , x^\beta\rho_{\nu+1}(x)\right)$,  with $x\geqslant0,\, \nu\geqslant 0$ and $\beta>-1$.\par
 \medskip

 \noindent{\bf II.} For $\delta_1=\delta_0\ne0 \Leftrightarrow s\ne0, r=0$, with $\eta= -{\gamma}\ne0$,  we have \\
\indent $\phi(x)=-s^2(x-\beta_0)+s\alpha_1+\gamma,\, \vartheta(x)=2s(x-\beta_0)-\alpha_1$,\\
\indent $\chi(x)=-(x-\beta_0),\, \varrho(x)=s(x-\beta_0)$.\par
\noindent Since we are working under the two conditions $\eta\ne0$ and $\delta_0\ne0$, the first system $({\cal S}_{1})$ is the one to be considered which we may write
\begin{align*}
&\left(-s^2(x- \beta_0)+s \alpha_1 +\gamma\right) w_0''+ \left(2s(x-\beta_0-s)-\alpha_1\right) w_0'+\left(-x+\beta_0+2s\right) w_0 = 0 ,\\
&\gamma w_1 = \left(s^2(x- \beta_0)-s \alpha_1 -\gamma\right)w_0'+s\left(-x+\beta_0+2s\right)w_0.
\end{align*}
Under the assumptions that $\gamma=1$ and $s=-1$, by  moving the singularity of the second order differential equation at the origin, we see that $\alpha_1=\beta_0+1$.\\
 We thus  recover the classical 2-OPS treated  in \cite{6} (referred to there  as Case $A$).\\
 By setting $\beta_0=\alpha+2$ and so $\alpha_1=\alpha+3$, for fixed $\alpha$ (taken as a free parameter),  a family of $2$-OPS has been highlighted giving rise to 2-OPS  analogous to the  classical Laguerre ones admitting the integral representations \eqref{eq:4.21}-\eqref{eq:4.22} with $w_0(x,\alpha)=\omega_{\alpha}(x)e^{-x-1}$ and $w_1(x,\alpha)=-\left(\omega_{\alpha+1}(x)e^{-x-1}\right)'$,  $ \alpha>-1 , \, {\mathscr{C}}= [0 ; +\infty[$ and $ \lambda_1=\lambda_0=0$, where $\omega_{\alpha}(x)$ is the scaled modified Bessel function of the first kind given by \eqref{eq:2.1}. Adopting the same approach as in \cite{9}, similar results have been obtained independently in \cite{8} with the two positive weight functions $\left(e^{-cx}\omega_{\alpha}(x),e^{-cx}\omega_{\alpha+1}(x)\right)$, $x\geqslant0 ,\;\alpha>-1$ and $c>0$.\par
  \medskip

\noindent{\bf III.} For $\delta_1=-\delta_0\ne0 \Leftrightarrow s=0, r\ne0$,  with  $\eta= 2r\alpha_1-{\gamma}$, the involving polynomial coefficients in this case are in turn given by\\
\indent $\, \phi(x)=r^2(x-\beta_0)-r\alpha_1+\gamma, \ \vartheta(x)=-\alpha_1,\ \chi(x)=-(x-\beta_0), \ \varrho(x)=r(x-\beta_0),\,$ \\
\indent  $\, \sigma(x)=-r(x-\beta_0)-\alpha_1$ and $ \tau(x)=-x+\beta_0-r$.\\
Depending on the parameter $\eta$, we have in fact to deal with two different systems.
The first one   is in fact of   form $({\cal S}_{1})$ arising when $\eta\ne0$, since we also have $\delta_0\ne0$.
For $\eta=0$, however,  we will treat a system of type $({\cal S}_{3})$.\\
\noindent {\bf- Subcase III.1} For $\eta \neq 0$, we have  the following system
  \begin{align*}
   &\left(r^2(x-\beta_0)-r\alpha_1+\gamma\right) w_0''+(2r^2-\alpha_1) w_0'-(x-\beta_0) w_0 = 0 , \\
   &(2r\alpha_1-\gamma) w_1 = \left(r^2(x-\beta_0)-r\alpha_1+\gamma\right)w_0'+r\left(x-\beta_0+r\right)w_0.
  \end{align*}
 By setting $r^2=1$, and then making a linear transformation to position the singularity of the second order differential equation at the origin, we get $\beta_0+r\alpha_1 - \gamma=0$. The corresponding family of classical 2-OPS  has already been mentioned  in \cite{6} (denominated therein Case $B$) without addressing the issue of the integral representations for its orthogonality measures. That is what we will do right after.\\
Fix $r=-1$, so that $\gamma=\beta_0-\alpha_1$ and $\eta=-\beta_0-\alpha_1$. Then  $w_0$ and $w_1$  satisfy
   \begin{align}
& x w_0''+(2-\alpha_1) w_0'+(\beta_0-x) w_0 = 0 ,\label{eq:4.38}\\
&(\alpha_1+\beta_0)  w_1 = -xw_0'+\left(x-\beta_0-1\right)w_0,\label{eq:4.39}
\end{align}
and subject to
\begin{subequations}
\begin{align}
\left[x w_0 f'+\left((\alpha_1-1) w_0 -x w_0'\right)f \right]_{\mathscr{C}}&=0, \label{eq40a}\\
\left[x w_0 f\right]_{\mathscr{C}}&= 0. \label{eq40b}
\end{align}
\end{subequations}
The two parameters $\alpha_1$ and $\beta_0$ must satisfy $\alpha_1\ne\pm\beta_0$, since neither $\gamma$ nor $\eta$ is zero.\par
\noindent Let $ p=-\frac{1}{2}(\alpha_1+\beta_0)$ and $q=-\frac{1}{2}(\alpha_1-\beta_0)$, so that  instead of $\alpha_1,\beta_0$, we shall often write  $p$ and $q$ in various formulas.  More precisely,  we need the expressions
$$1\pm p=1\mp\frac{1}{2}(\alpha_1+\beta_0),\, 1\pm q=1\mp\frac{1}{2}(\alpha_1-\beta_0),\, \ p-q=-\beta_0 \ \mbox{ and } \ p+q=-\alpha_1.$$
 To write simplified formulas,  we also introduce  two modified Tricomi functions that are used here and later in Case VI.
  For $c>0$,  define
 \begin{subequations}\begin{align}
 \hskip 1cm {\mathscr{U}(cx;p,q)} &:=e^{-cx}U\big(1+p\, ;2+ p+q\, ; 2cx\big),&&\mbox{if}\ x\geqslant0,\label{eq41a}\\
       {\mathscr{V}(cx;q,p)} &:=e^{cx}U\big(1+q\, ;2+ p+q\, ; -2cx\big), &&\mbox{if}\ x\leqslant0.\label{eq41b}\hskip 1cm
  \end{align}\end{subequations}
  Observe that $p$ and $q$ are not interchangeable in both $\mathscr{U}$ and $\mathscr{V}$ because of the first parameter in $U$.
 By working out the derivatives, using \eqref{eq:2.19}, we get
    \begin{subequations} \begin{align}
 \hskip 1cm \frac{d}{dx}{\mathscr{U}(cx;p,q)} &=c{\mathscr{U}(cx;p,q)}-2c{\mathscr{U}(cx;p,1+q)},&&\mbox{if}\ x\geqslant0,\label{eq42a}\\
      \frac{d}{dx} {\mathscr{V}(cx;q,p)} &=-c{\mathscr{V}(cx;q,p)}+2c{\mathscr{V}(cx;q,1+p)}, &&\mbox{if}\ x\leqslant0.\label{eq42b}\hskip 0.5cm
  \end{align} \end{subequations}
We are now ready to state the following result.
 \begin{proposition}
 Under the assumptions $-1<p<0$ and $-1<q<0$ with $-2<p+q<-1$, the functionals $u_0$ and $u_1$ admit the integral representations \eqref{eq:4.6}-\eqref{eq:4.7} over ${\mathscr{C}}=\mathbb{R}$, with
 \begin{align}
w_0(x)&=\left\{\begin{aligned}
&k_1 \mathscr{U}(x;p,q), &&  \mbox{if}\ x\geqslant0,\\
&k_2\mathscr{V}(x;q,p), && \mbox{if}\ x\leqslant0,
\end{aligned}\right.\label{eq:4.43}\\
w_1(x)&=\left\{\begin{aligned}
&{\tilde{k}_1}\big[(1+\beta_0)\mathscr{U}(x;p,q)-2x\mathscr{U}(x;p,1+q)\big], && \mbox{if}\ x\geqslant0, \\
&\tilde{k}_2\big[(1+\beta_0- 2x )\mathscr{V}(x;q,p)+2x\mathscr{V}(x;q,1+p)\big], &&\mbox{if}\ x\leqslant0.
\end{aligned}\right.\label{eq:4.44}
\end{align}
The constants $k_1$, $k_2$, $\tilde{k}_1$ and $\tilde{k}_2$ are given by
   \begin{align}
   k_1=\frac{p\Gamma(1-q)}{\Delta(p,q)}, \, k_2=\frac{q\Gamma(1-p)}{\Delta(p,q)}, \, \tilde{k}_1=\frac{q\Gamma(-q)}{2\Delta(p,q)}\ \mbox{and}\
   \tilde{k}_2=\frac{q\Gamma(-p)}{2\Delta(p,q)}, \label{eq:4.45}
   \end{align}
    with
$$\Delta(p,q):={\Gamma(-p-q)}\Big[p\,{_2}F_1\big(1\, , 1+p\, ; 1-q\, ; -1\big)+q\,{_2}F_1\big(1\, , 1+q\, ; 1-p\, ; -1\big)\Big],$$
   where ${_2}F_1$ is the Gauss hypergeometric function.
\end{proposition}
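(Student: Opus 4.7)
The plan is to solve the second-order equation \eqref{eq:4.38} on each half-line where the leading coefficient $x$ does not vanish, retain on each side the solution which decays at infinity, and pin down the two remaining constants using the normalization $\langle u_0,1\rangle=1$ together with a matching condition at $x=0$. The weight $w_1$ is then deduced from \eqref{eq:4.39}.

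On $[0,+\infty)$, the substitution $w_0(x)=e^{-x}h(2x)$ turns \eqref{eq:4.38} into Kummer's equation \eqref{eq:2.16} for $h$ with parameters $a=1+p$, $c=2+p+q$. The general solution is a combination of $M$ and $U$; by \eqref{eq:2.23} the Kummer function $M$ grows exponentially, so integrability and the boundary conditions at $+\infty$ force the $M$-coefficient to vanish, leaving $w_0(x)=k_1\mathscr{U}(x;p,q)$. A mirror substitution on $(-\infty,0]$ (sending $x\mapsto-x$) yields $w_0(x)=k_2\mathscr{V}(x;q,p)$. Plugging these into \eqref{eq:4.39}, using the derivative formulas \eqref{eq42a}--\eqref{eq42b} with $c=1$ and the identity $\alpha_1+\beta_0=-2p$, produces the piecewise expression \eqref{eq:4.44} with $\tilde k_j$ proportional to $k_j/p$.

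It remains to verify the boundary conditions and to fix $k_1,k_2$. Under the hypothesis $-2<p+q<-1$ the Tricomi parameter $c=2+p+q$ lies in $(0,1)$, so Table~1 shows that $U(1+p;2+p+q;2x)$ is bounded at the origin with a subdominant $x^{-1-p-q}$ term that vanishes as $x\to 0^+$, and similarly on the negative side. Hence $x\,w_0(x)\to 0$ at $0^\pm$, which kills the $f'$-contribution and makes \eqref{eq40b} automatic; the surviving $f(0)$-coefficient in \eqref{eq40a} must cancel between the two sides, which is the continuity condition $w_0(0^+)=w_0(0^-)$, i.e.
$$k_1\,\Gamma(-p)=k_2\,\Gamma(-q).$$
At $\pm\infty$, the Tricomi asymptotic \eqref{eq:2.17} combined with the exponential prefactor in $\mathscr{U},\mathscr{V}$ gives super-polynomial decay, so the remaining brackets vanish.

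The second equation is the normalization $\int_{-\infty}^{+\infty}w_0=1$. The change of variable $t=2x$ followed by the Laplace-type formula \eqref{eq:2.22} with $b=1$, $z=\tfrac12$, $a=1+p$, $c=2+p+q$ evaluates
$$\int_0^{+\infty}\mathscr{U}(x;p,q)\,dx=\frac{\Gamma(-p-q)}{\Gamma(1-q)}\,{_2}F_1(1,1+p;1-q;-1),$$
and symmetrically for $\mathscr{V}$ on the negative axis. Using the matching relation to eliminate one unknown, together with the identities $\Gamma(1-p)=-p\Gamma(-p)$ and $\Gamma(1-q)=-q\Gamma(-q)$, and collecting the result under the common denominator, yields \eqref{eq:4.45} with $\Delta(p,q)$ as defined. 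The main obstacle is the analysis at the regular singular point $x=0$: the combined hypotheses $-1<p<0$, $-1<q<0$, $-2<p+q<-1$ are precisely what simultaneously guarantee local integrability of $w_0$ at the origin, finiteness of all $\Gamma$-values appearing in \eqref{eq:4.45}, convergence of the normalization integrals via \eqref{eq:2.22}, and vanishing of the fractional-power term so that the boundary matching reduces to ordinary continuity. Everything else is routine bookkeeping with the derivative and reflection identities.
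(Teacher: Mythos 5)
Your proposal is correct and ends up exactly where the paper does, but the route to the solutions of \eqref{eq:4.38} is genuinely different. The paper uses Laplace's method: it posits $w_0(x)=\int_{\mathcal C}e^{xt}v(t)\,dt$, finds $v(t)=(t+1)^p(t-1)^q$ together with the requirement that $(t+1)^{p+1}(t-1)^{q+1}e^{xt}$ vanish at the endpoints of $\mathcal C$, obtains the three admissible paths $(-\infty,-1)$, $(1,+\infty)$, $(-1,1)$, converts the resulting integrals into $e^{-x}U(1+p;2+p+q;2x)$, $e^{x}U(1+q;2+p+q;-2x)$ and $e^{-x}M(1+p;2+p+q;2x)$ via \eqref{eq:2.20}--\eqref{eq:2.21}, and discards the $M$-solution by its exponential growth. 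You instead reduce \eqref{eq:4.38} directly to Kummer's equation through $w_0(x)=e^{-x}h(2x)$ (this does give $a=1+p$, $c=2+p+q$) and its mirror on the negative half-line, then discard $M$ by \eqref{eq:2.23}. Both are legitimate: your reduction is more elementary and avoids the contour bookkeeping, though it tacitly uses that $\{M,U\}$ is a fundamental system (true here since $1+p\in(0,1)$ and $c=2+p+q\in(0,1)$); the paper's Laplace form has the advantage of displaying the weights as explicit Laplace integrals and of being reusable almost verbatim in Case VI. From that point on the two arguments coincide: the same matching relation $\Gamma(-p)k_1=\Gamma(-q)k_2$ at the origin (your reading of it as the cancellation of the interior boundary terms created by the piecewise integration by parts is, if anything, better motivated than the paper's bare continuity requirement), the same evaluation of the normalization integrals by \eqref{eq:2.22}, the same $2\times2$ linear system yielding $k_1,k_2$, the same passage to $w_1$ via \eqref{eq:4.39} and \eqref{eq42a}--\eqref{eq42b}, and the same checks at $0^{\pm}$ and $\pm\infty$. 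One caveat: if you carry out your last step explicitly you get $\tilde k_1=k_1/(2p)=\Gamma(1-q)/\big(2\Delta(p,q)\big)=-q\Gamma(-q)/\big(2\Delta(p,q)\big)$, and similarly for $\tilde k_2$, which is the opposite sign to the printed \eqref{eq:4.45}; your statement that $\tilde k_j$ is proportional to $k_j/p$ is consistent with this computation, and the discrepancy looks like a sign slip in the paper, but you should record the constants explicitly rather than leave them as ``proportional to''.
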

\begin{proof}
  We start with the observation that the second-order differential equation \eqref{eq:4.38} is of Laplace type. It is well known that such equations have
  Laplace integrals as solutions. We are thus looking for a function $w_0$ of the form
 \begin{align}
 w_0(x) = \int_{\cal C}e^{xt}v(t)dt,\label{eq:4.46}
\end{align}
where the function $v(t)$ and the path of integration ${\cal C}$ have to be chosen so that \eqref{eq:4.46} becomes a solution of \eqref{eq:4.38}.
See, e.g. \cite[Sec.VIII]{Ince} for details.\\
Let $P$ and $Q$ be polynomials such that $ P(t)=(2-\alpha_1)t+\beta_0 $ and $ Q(t)=t^2-1$.
 Assuming the right to apply the differentiation operation  under the sign of integration, we may compute  the derivatives $w_0'(x)$ and  $w_0''(x)$.
  Substituting into \eqref{eq:4.38} and  integrating by parts yields
 \begin{align}
\left[e^{xt}Q(t)v(t)\right]_{\cal C}-\int_{\cal C}e^{xt}\Big\{\frac{\partial}{\partial t}\big[Q(t)v(t)\big]-P(t)v(t)\Big\}dt=0,\label{eq:4.47}
\end{align}
Obviously,  Eq.  \eqref{eq:4.47}  still satisfied when
\begin{align}
\left[e^{xt}Q(t)v(t)\right]_{\cal C}=0 \quad \mbox{and}\quad \frac{\partial}{\partial t}\big[Q(t)v(t)\big]=P(t)v(t).
\label{eq:4.48}
\end{align}
A trivial verification shows that the differential equation in \eqref{eq:4.48} holds when the function $v(t)=(t+1)^{p}(t-1)^{q}$ (up to a multiplicative constant), and so  the boundary conditions  gives rise to the equation $(t+1)^{p+1}(t-1)^{q+1}e^{xt}=0$.
   Observe that the first two roots of this equation are $t=-1$ for $\, p+1>0$ and $t=1$ for $\, q+1>0$.\break
  If now $x$ be restricted to positive values, a third root is given by $t=-\infty$, while when $x$ is negative it is given by $t=+\infty$. \\
This gives a variety of paths, each  leads to a particular solution. Possible paths such that the integrand vanishes identically at their endpoints are
\begin{align*}
&{\rm 1.} \ -\infty<t<-1, \quad x>0, \ p>-1.\\
&{\rm 2.} \ +1<t<+\infty, \quad x<0, \ q>-1.\\
&{\rm 3.}\ -1<t<+1, \quad \,  x\in\mathbb{R},\ p>-1, \, q>-1.\hskip 5cm
 \end{align*}
Replacing $v(t)$ by its explicit expression in \eqref{eq:4.46}, taking into consideration these paths, three particular solutions to the equation \eqref{eq:4.38} can be represented as follows
\begin{align*}
{\rm 1.} \  w^1_0(x)&=c_1\int_{-\infty}^{-1}e^{xt}(t+1)^{p}(t-1)^{q}dt,\quad \mbox{if}\ x>0,\ p>-1,\\
{\rm 2.} \  w^2_0(x)&=c_2\int_1^{+\infty}e^{xt}(t+1)^{p}(t-1)^{q}dt,\quad \mbox{if}\ x<0, \ q>-1,\\
{\rm 3.} \  w^3_0(x)&=c_3\int_{-1}^1e^{xt}(t+1)^{p}(t-1)^{q}dt, \quad \mbox{for}\ x\in \mathbb{R}, \ p>-1, \ q>-1,\hskip 1cm
\end{align*}
 where $c_1$, $c_2$, and $c_3$ are  constants. With appropriate changes of the variable in these integrals, due to \eqref{eq:2.20} and \eqref{eq:2.21},
 these  solutions can be written  as follows
\begin{align*}
   w^1_0(x)&=k_1e^{-x}U\big(1+p\, ; 2+p+q\, ; 2x\big),  \quad \mbox{if}\ x>0,\  p>-1,\\
   w^2_0(x)&=k_2e^{x}U\big(1+q\, ;2+p+q\, ; -2x\big), \quad\mbox{if}\ x<0,\  q>-1,\\
   w^3_0(x)&=k_3e^{-x} M\big(1+p\, ; 2+p+q\, ; 2x\big), \quad\mbox{for}\ x\in \mathbb{R},\  p>-1,\, q>-1,\hskip 1cm
\end{align*}
with $k_1$, $k_2$ and $k_3$ are new constants. \\
Before pursuing our reasoning, several observations are worth particular mention.\\
\noindent{\bf (a)} By virtue of \eqref{eq:2.23}-\eqref{eq:2.24}, the function $w^3_0$ grows exponentially when $x\to \pm\infty$.\\
\noindent{\bf (b)} Under the additional  condition $1+p+q<0$ (i.e., $\alpha_1>1$), due to the results listed in Table 1 Sec. 2, we have
   \begin{align*}
 \underset{x\to 0^+}{\lim}w_0^1(x)=\frac{\Gamma(\alpha_1-1)}{\Gamma(-q)}k_1 \quad\mbox{and}\quad
\underset{x\to 0^-}{\lim}w_0^2(x)=\frac{\Gamma(\alpha_1-1)}{\Gamma(-p)}k_2,\quad -1<p, q<0.
\end{align*}
From now on, Tables 1 and 2   will be referred to without reference to Section 2.\\
\noindent{\bf (c)} We easily check that  $w^1_0(x)\to 0$ as $x\to +\infty$ and $w^2_0(x)\to0$ as $x\to -\infty$. \\
\noindent{\bf (d)} Combining the imposed conditions on $p$ and $q$ gives   $-2<p+q<-1$ which, in turn, provides  the inequalities
 $1<\alpha_1<2$ and $\alpha_1-2<\beta_0<2-\alpha_1$.\par

 \noindent{\bf (e)} Two  limiting cases naturally arise here, namely, $1+p+q=0$ and $2+p+q=0$. They will be  examined aside.\par
 \smallskip
 \noindent From what has been shown, we may conclude that the weight function $w_0$ satisfying \eqref{eq40a}-\eqref{eq40b}
 can be composed of both functions $\mathscr{U}$ and  $\mathscr{V}$ supported  on two touching intervals as stated in \eqref{eq:4.43}.\\
We next turn to computing the constants $k_1$, $k_2$.  To do this, for $1+p+q<0$, we deduce from the  observation {\bf (b)} that both limits
 $\underset{x\to 0^+}{\lim}\mathscr{U}(x;p,q)$ and $\underset{x\to 0^-}{\lim}\mathscr{V}(x;q,p)$ are finite so that the continuity of $w_0(x)$ at $x=0$  is plainly ensured if we take $\underset{x\to 0^+}{\lim}w_0(x)=\underset{x\to 0^-}{\lim}w_0(x)$. This leads to the equation
\begin{align}
\Gamma(-p)k_1-\Gamma(-q)k_2=0.\label{eq:4.49}
\end{align}
 Another equation involving $k_1$ and  $k_2$  is derived from  $\left< u_0\, ,\, 1\right>=1$. This gives
\begin{align}
I_1({p,q}) k_1 +I_2({p,q}) k_2=1, \label{eq:4.50}
\end{align}
where $I_1({p,q})$ and $I_2({p,q})$ stand for the  two integrals
 $$I_1({p,q})\!:=\!\!\int_0^{+\infty}\!\!\!\!\!e^{-x} U\big(1+p  ; 2+p+q ; 2x\big)dx\, ;\,
I_2({p,q})\!:=\!\!\int_0^{+\infty}\!\!\!\!\!e^{-x} U\big(1+q  ; 2+p+q ; 2x\big)dx.$$
An application of  \eqref{eq:2.22} allows us to get
$$I_1({p,q})\!=\!\frac{\Gamma(\!-p\!-\!q)}{\Gamma(1-q)}{_2}F_1\big(1 , 1+p ; 1-q ; -1\big)\, ; \,
I_2({p,q})\!=\!\frac{\Gamma(\!-p\!-\!q)}{\Gamma(1-p)}{_2}F_1\big(1 , 1+q ; 1-p ; -1\big).$$
The solution of   \eqref{eq:4.49}-\eqref{eq:4.50} is then the pair of constants ${k}_1$ and ${k}_2$ stated  in \eqref{eq:4.45}.\\
 The proof is completed by showing that the vanishing conditions \eqref{eq40a}-\eqref{eq40b} are fulfilled. Observe  first
 that the  function $w_0(x)$ given by \eqref{eq:4.43} tends to zero as $x\to\pm\infty$, and so does $xw_0(x)$.
 We now differentiate   $w_0$  to get
 \begin{align*}
xw_0'(x)=\left\{\begin{aligned}
&k_1x\big[\mathscr{U}(x;p,q)-2\mathscr{U}(x;p,1+q)\big], &&\mbox{if}\ x\geqslant0,\\
&k_2 x\big[-\!\!\mathscr{V}(x;q,p)+2\mathscr{V}(x;q,1+p)\big], &&\mbox{if}\ x\leqslant0.
\end{aligned}\right.\end{align*}
Due to Tables 1\&2, we find that  $\underset{x\to 0^+}{\lim}xw_0'(x)=\underset{x\to 0^-}{\lim}xw_0'(x)=0$. Moreover,
we see that $xw’_0(x)\to 0$ as $x\to\pm\infty$. We conclude that  the required conditions are met.
Our next step is to derive the second function $w_1$. For this purpose, if we substitute $xw’_0(x)$ and $w_0(x)$ by their expressions
into \eqref{eq:4.39},  we immediately obtain \eqref{eq:4.44}.
Based again on \eqref{eq:4.49} we conclude similarly that the continuity of $w_1$ at $x=0$ occurs because the equality
$\underset{x\to 0^+}{\lim}w_1(x)=\underset{x\to 0^-}{\lim}w_1(x)$ holds.
\end{proof}\par
\noindent {\bf Remark 4.2.}
 If we seek integral representations via weights supported only on the positive real axis,  we need consider  measures with add of Dirac masses at $x=0$,
  that is, $d\mu_0(x)=\big(w_0(x)+\lambda_0\delta(x)\big)dx\,$ and $\,d\mu_1(x)=\big(w_1(x)+\lambda_1\delta(x)\big)dx$ with
\begin{align*}
w_0(x)&=k\mathscr{U}\big(x; p, q\big),\quad x\geqslant0, \\
w_1(x)&=\tilde{k}\Big[(1+\beta_0)\mathscr{U}\big(x; p, q\big)-2x\mathscr{U}\big(x; p, 1+ q\big)\Big],\quad x\geqslant0,
 \end{align*}
where
  \begin{align*}
   k&=\frac{\beta_0\Gamma(1-q)}{\Gamma(\alpha_1)\left[\beta_0\,{_2}F_1\big(1\, , 1+p\, ; 1-q\, ; -1\big)-q\right]} , \\
     \tilde{k}&=\frac{k}{2p},\ \lambda_0=\frac{\Gamma(\alpha_1)}{\beta_0\Gamma(-q)}k\ \ \mbox{and}\  \ \lambda_1=\frac{\Gamma(\alpha_1)}{\Gamma(-q)}\tilde{k}.\hskip 1cm
  \end{align*}
   The detailed verification of these results being left to the reader.\\

 \noindent We now return to the two limiting cases mentioned earlier. Regarding the first one, we find again a pair of weights with addition of Dirac masses at the origin. So the proof is only sketched.
  The second case can be treated in a similar way as in Proposition 4.2, and so the obtained results will be announced  without  proofs.\par
 \medskip
  \noindent  $\bullet$ The case $1+p+q=0\Leftrightarrow \alpha_1=1$, which gives $-1<\beta_0<1$. In this case we obtain that $u_0$ and $u_1$ admit
  the integral representations \eqref{eq:4.21}-\eqref{eq:4.22} with
\begin{align}
w_0(x)&=k \mathscr{U}\big(x;p,-p-1\big),\ \ x\geqslant0,\label{eq:4.51}\\
w_1(x)&={-{k}}\big[\mathscr{U}\big(x;p,-p-1)+\frac{2}{1+\beta_0}x\mathscr{U}\big(x;p,-p\big)\big],\ \ x\geqslant0,\label{eq:4.52}
\end{align}
satisfying  the conditions
  \begin{subequations}\begin{align}
\left[x w_0 f'-x w_0'f \right]_{\mathscr{C}}+\lambda_0\left[f'(0)+\beta_0f(0)\right]&=0, \label{eq53a}\\
\left[x w_0 f\right]_{\mathscr{C}}-(1+\beta_0)\lambda_1f(0)-\lambda_0\beta_0f(0)&= 0, \label{eq53b}
\end{align}\end{subequations}
where the constants $k$, $\lambda_0$ and $\lambda_1$  are such that
\begin{align*}
k=\frac{\beta_0\Gamma(2+p)}{1\!+\!p\!+\!\beta_0 \, {_2}F_1\big(1\, , 1+p\, ; 2+p\, ; -1\big)} ,\,
\lambda_0=\frac{k}{\beta_0\Gamma(1\!+\!p)}\, \mbox{and} \ \lambda_1=\frac{-{k}}{(1+\beta_0)\Gamma(\!1+\!p)}. 
\end{align*}
The proof of \eqref{eq:4.51}-\eqref{eq:4.52} runs as for \eqref{eq:4.43}-\eqref{eq:4.44} when $x$ lies in the positive real axis.
Finally, it remains to determine $\lambda_0$, $\lambda_1$ and $k$ so that \eqref{eq53a}-\eqref{eq53b} are met. \hfill\break
At first, we see  that $\!\lim\limits_{x\to+\infty}\!xw_0(x)\!=\!\lim\limits_{x\to+\infty}\!xw'_0(x)=0$. Next, due to Tables 1\&2, we easily check that
 $\underset{x\to 0^+}{\lim}xw_0(x)=0\  \mbox{and}\ \underset{x\to 0^+}{\lim}xw'_0(x)=-{k}/{\Gamma(1+p)}$. \\
On account of these results,  the system \eqref{eq53a}-\eqref{eq53b} simplifies to
\begin{align*}
kf(0)-\Gamma(1+p)\lambda_0\big(f'(0)+\beta_0f(0)\big)&=0,\\
(1+\beta_0)\lambda_1f(0)+\lambda_0\beta_0f(0)&= 0,
\end{align*}
which, for $f(x)=1$, implies $\lambda_0=k/\beta_0\Gamma(1+p)$ and $\lambda_1=-k/(1+\beta_0)\Gamma(1+p)$.\par
\noindent On applying now  \eqref{eq:4.21} for $f(x)=1$ and then use \eqref{eq:2.22}, we obtain the identity
\begin{align*}
\Big[1+p+\beta_0\, {_2}F_1\big(1\, , 1+p\, ; 2+p\, ; -1\big) \Big]k=\beta_0\Gamma(2+p).
\end{align*}
This allows us to derive the constant $k$ and, in turn, the expressions of $\lambda_0$ and $\lambda_1$ as stated above.\par
\noindent $\bullet$ The case $2+p+q=0\Leftrightarrow \alpha_1=2$. This case holds  if and only if $p=q=-1$, and consequently $\beta_0=0$.
Thus,   the differential system  \eqref{eq:4.38} and \eqref{eq:4.39}  becomes
 \begin{align}
 & w_0''- w_0 = 0 , \label{eq:4.54}\\
&2  w_1 = -xw_0'+\left(x-1\right)w_0,\label{eq:4.55}
\end{align}
provided that
  \begin{subequations}\begin{align}
\left[x w_0 f'+\left(w_0 -x w_0'\right)f \right]_{\mathscr{C}}&=0,\label{eq56a}\\
\left[x w_0 f\right]_{\mathscr{C}}&= 0. \label{eq56b}\end{align}
  \end{subequations}
  The general solution of Equation \eqref{eq:4.54} is $w_0(x)= c_1e^{x}+c_2e^{-x}$, for all $\, x\in\mathbb{R}$. So, for $\mathscr{C}=]-\infty\, ; +\infty[$,
  the pair  of weights $w_0$ and $w_1$, subject to \eqref{eq56a}-\eqref{eq56b}, may   be   expressed in the form
  \begin{align*}
w_0(x)\!=\!\left\{\begin{aligned}
&c e^{-x}, &&\mbox{if}\ x\geqslant0,\\
&c e^{x}, &&\mbox{if}\ x\leqslant0,
\end{aligned}\right.\ \mbox{and}\
w_1(x)\!=\!\left\{\begin{aligned}
&\!-c^2\left(1\!-2x\right)e^{-x}, &&\mbox{if}\ x\geqslant0,\\
&\!-c^2 e^{x}, &&\mbox{if}\ x\leqslant0,
\end{aligned}\right.\ \mbox{with}\ c=\frac{1}{2}.
\end{align*}
 \noindent{\bf - Subcase III.2}: For $\eta = 0$,  we fix $r=1$ and set $\beta_0=\alpha_1:=\alpha+1$ ($\alpha$   parameter).
From this, the two polynomials $\sigma$ and $\tau$ write $\sigma(x)=-x$ and $\tau(x)=-x+\alpha$.\\
  On substituting these expressions into   $({\cal S}_{3})$, we deduce that
 \begin{align}
  & xw_0'+(x-\alpha)w_0=0 ,\label{eq:4.57}\\
  & w_1'-w_1 =w_0',\label{eq:4.58}
\end{align}
under the boundary conditions \eqref{eq:4.28}-\eqref{eq:4.29} which we can  write
 \begin{subequations}
 \begin{align}
  \left[x w_0f\right]_{\mathscr{C}}-\lambda_0\big(1+\alpha\big)f(0)=0,\label{eq59a}\\
 \left[(w_1- w_0)f\right]_{\mathscr{C}}+(\lambda_1-\lambda_0)f'(0)+\lambda_1 f(0)=0.\label{eq59b}
 \end{align}
\end{subequations}
\begin{proposition}
For $\alpha\geqslant0$, $x\geqslant0$,  the  functionals $u_0$ and $u_1$ admit the integral representations \eqref{eq:4.21}-\eqref{eq:4.22}
with $\lambda_0= 0$,  $\lambda_1= -{1}/{2^{\alpha+1}}$, and the weights functions
\begin{align*}
  w_0(x)= \frac{1}{\Gamma(\alpha+1)}x^\alpha e^{-x}\ \ \mbox{and}\ \
  w_1(x)=  w_0(x)-\frac{e^x}{2^{\alpha+1}}{\frac{\Gamma(\alpha+1, 2x)}{\Gamma(\alpha+1)}}.
\end{align*}
   \end{proposition}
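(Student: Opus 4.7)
My plan follows the three-step template for $({\cal S}_{3})$ laid out in Situation 2c: solve the first-order ODE \eqref{eq:4.57} for $w_0$, solve the linear ODE \eqref{eq:4.58} for $w_1$, and then use the boundary conditions \eqref{eq59a}--\eqref{eq59b} to pin down the Dirac coefficients $\lambda_0$ and $\lambda_1$.

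For $w_0$, the equation $xw_0'+(x-\alpha)w_0=0$ is separable: dividing gives $(\log w_0)'=\alpha/x-1$, so on $(0,+\infty)$ one obtains $w_0(x)=c\,x^\alpha e^{-x}$, the classical Laguerre weight anticipated in the closing remarks of Section 3. The normalization $\langle u_0,1\rangle=1$, together with Euler's integral, forces $c=1/\Gamma(\alpha+1)$. Since $xw_0(x)=x^{\alpha+1}e^{-x}/\Gamma(\alpha+1)$ vanishes at both $0$ (using $\alpha\geqslant 0$) and $+\infty$, the bracket $[xw_0 f]_{[0,+\infty)}$ in \eqref{eq59a} is zero for every polynomial $f$, which forces $\lambda_0(1+\alpha)f(0)=0$ and hence $\lambda_0=0$.

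For $w_1$, with $\delta_0=s+r=1$, equation \eqref{eq:4.58} reduces to the linear first-order ODE $w_1'-w_1=w_0'$. I multiply by the integrating factor $e^{-x}$, integrate $(e^{-x}w_1)'=e^{-x}w_0'$ from $x$ to $+\infty$ (discarding the homogeneous solution $Ce^{x}$ because it violates the decay needed for the moments to exist), and integrate by parts on the right to obtain
\begin{equation*}
e^{-x}w_1(x)=e^{-x}w_0(x)-\frac{1}{\Gamma(\alpha+1)}\int_x^{+\infty}\!\!t^\alpha e^{-2t}\,dt.
\end{equation*}
The substitution $u=2t$ identifies the remaining integral with $\Gamma(\alpha+1,2x)/2^{\alpha+1}$; multiplying through by $e^x$ recovers the announced formula for $w_1$. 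The constant $\lambda_1$ is then extracted from \eqref{eq59b}: one evaluates $w_1-w_0=-e^x\Gamma(\alpha+1,2x)/(2^{\alpha+1}\Gamma(\alpha+1))$ at the two endpoints, using $\Gamma(\alpha+1,0)=\Gamma(\alpha+1)$ to get $(w_1-w_0)(0)=-1/2^{\alpha+1}$, and the asymptotic \eqref{eq:2.29} to get $e^x\Gamma(\alpha+1,2x)\sim (2x)^\alpha e^{-x}\to 0$ at infinity. This yields $[(w_1-w_0)f]_{[0,+\infty)}=f(0)/2^{\alpha+1}$, so with $\lambda_0=0$ already fixed, testing \eqref{eq59b} against $f\equiv 1$ gives $\lambda_1=-1/2^{\alpha+1}$.

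The main point demanding care is to verify that the bracket conditions \eqref{eq59a}--\eqref{eq59b} really hold for every $f\in{\mathscr P}$ and not only for the representative polynomial tests used to identify the constants. For \eqref{eq59a} this comes down to the vanishing of $xw_0$ at the origin (hence the hypothesis $\alpha\geqslant 0$) coupled with its super-exponential decay at infinity; for \eqref{eq59b} to the analogous polynomial-times-exponential decay of $e^x\Gamma(\alpha+1,2x)$ supplied by \eqref{eq:2.29}. A secondary subtlety is the uniqueness step in the derivation of $w_1$, where one must justify discarding the homogeneous solution $Ce^x$ via the requirement that $w_1$ integrate against every polynomial.
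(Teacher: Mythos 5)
Your proposal is correct and follows essentially the same route as the paper: solve \eqref{eq:4.57} and normalize to get the Laguerre weight, force $\lambda_0=0$ from \eqref{eq59a}, integrate the linear equation \eqref{eq:4.58} while discarding the growing $e^{x}$ solution so that the incomplete-gamma term appears, and then fix $\lambda_1=-1/2^{\alpha+1}$ from \eqref{eq59b} using $\Gamma(\alpha+1,0)=\Gamma(\alpha+1)$ and the asymptotics \eqref{eq:2.29}. The only (harmless) difference is that by integrating $(e^{-x}w_1)'=e^{-x}w_0'$ from $x$ to $+\infty$ you obtain the formula for $w_1$ uniformly in $\alpha\geqslant0$, whereas the paper integrates from $0$ to $x$ (needing $\alpha>0$ for the boundary term) and treats $\alpha=0$ as a separate short computation.
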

\begin{proof}
 The general solution of Equation \eqref{eq:4.57} may be written in the form
\begin{align*}w_0(x)=\left\{\begin{aligned}
  &c_1 x^\alpha e^{-x} ,   & &x\geqslant0,\\
  &c_2 |x|^\alpha e^{-x} , & & x<0,
\end{aligned}\right. \quad \mbox{for}\  \alpha>-1,
  \end{align*}
  where $c_1$, $c_2$ are two  arbitrary constants suitably chosen.
 Observe that   \eqref{eq59a} is realized if we take $c_2=0$ and $\lambda_0=0$.
We next apply the normalization condition $\left<u_0 \, ,\, 1\right>=1$ obtaining $c_1={1}/{\Gamma(\alpha+1)}$, and so  $w_0(x)=\frac{1}{\Gamma(\alpha+1)} x^\alpha e^{-x}$. This clearly shows that $u_0$ coincides with the classical Laguerre functional denoted sometimes by  ${\cal L}^{(\alpha)}$.
 Substitute now $w_0(x)$ by its expression in  \eqref{eq:4.57} and then integrate to get
 \begin{align}
 w_1(x)=c_3e^x+ e^x\int_0^x\!\! w_0'(t)e^{-t}dt,\quad x\geqslant0.\label{eq:4.60}
 \end{align}
 Perform integration by parts and use \eqref{eq:2.27} with \eqref{eq:2.28} to obtain
 \begin{align*}
  \int_0^x w_0'(t)e^{-t}dt= \frac{1}{\Gamma(\alpha+1)}t^\alpha e^{-2t}\Big]_0^x+  \frac{1}{\Gamma(\alpha+1)}\int_0^x t^\alpha 
 \end{align*}
 The restriction $\alpha>0$ implies that the first part vanishes at $x=0$. Thus,
 \begin{align}
  \int_0^x w_0'(t)e^{-t}dt = w_0(x)e^{-x}+\frac{1}{2^{\alpha+1}}\left[1-{\frac{\Gamma(\alpha+1 , 2x)}{\Gamma(\alpha+1)}}\right].
  \label{eq:4.61}
 \end{align}
If we substitute \eqref{eq:4.61} into \eqref{eq:4.60} and choose the constant   $c_3=-{1}/{2^{\alpha+1}}$, then
 \begin{align*}
  w_1(x)=w_0(x)-\frac{e^x}{2^{\alpha+1}}{\frac{\Gamma(\alpha+1, 2x)}{\Gamma(\alpha+1)}}, \ x\geqslant0.
 \end{align*}
From this, on account of \eqref{eq:2.29} with $\Gamma(a, 0)=\Gamma(a)$, it follows that
\begin{align*}
  \Big[w_1(x)-w_0(x)\Big]_0^{+\infty}=\frac{1}{2^{\alpha+1}}.
 \end{align*}
 If $\alpha$ assumes the value $0$, one can quickly verify that Eqs. \eqref{eq:4.57} and \eqref{eq:4.58} have the solutions
$w_0(x)=e^{-x}$ and $w_1(x)= c_4e^{x}+\frac{1}{2}e^{-x}$, respectively, where $c_4$ is an arbitrary constant.
Since we do not need the  general solution of \eqref{eq:4.58}, we will choose $c_4=0$, so that $w_1(x)$ writes  $w_1(x)=w_0(x)- \frac{1}{2}e^{-x}$.
Thus
$$\Big[w_1(x)-w_0(x)\Big]_0^{+\infty}=\frac{1}{2}.$$
To summarize, \eqref{eq59b} is realized if we set $\lambda_1=-\frac{1}{2^{\alpha+1}}, \, \alpha\geqslant 0$.
\end{proof}

\noindent{\bf IV.} For $\delta_1=0 ,\,\delta_0\ne0 \Leftrightarrow s= r\ne0$, with $\eta= 2r\alpha_1-{\gamma}$, we have  the polynomials\\
\indent $\ \phi(x)=\gamma,\, \vartheta(x)=2r(x-\beta_0)-\alpha_1,\, \chi(x)=-(x-\beta_0), \, \varrho(x)=2r(x-\beta_0)$.\\
\noindent Once again we will consider the two subcases $\eta\ne0$ and $\eta=0$ separately.\\
\smallskip
\noindent {\bf- Subcase IV.1}: For $\eta\ne0$, take $\gamma=1$ in  $({\cal S}_{1})$ to obtain that $w_0$ and $w_1$ satisfy
\begin{align}
&w_0''+\left(2rx-2r\beta_0-\alpha_1\right) w_0'+(-x+\beta_0+2r) w_0 = 0,\label{eq:4.62}\\
&(2r\alpha_1-1) w_1 = w_0'+2r(x-\beta_0)w_0,\label{eq:4.63}
\end{align}
under the boundary conditions
\begin{subequations}\begin{align}
  & \left[w_0 f'-\big(w_0'+(2rx-r^{-1}) w_0\big)f \right]_{\mathscr{C}}=0,\label{eq64a}\\
  & \big[w_0f\big]_{\mathscr{C}}=0,\label{eq64b}\end{align}
 \end{subequations}

\begin{proposition} For $r>0$ and $\mathscr{C}=\mathbb{R}$,  the  functionals $u_0$ and $u_1$ admit the integral representations \eqref{eq:4.6}-\eqref{eq:4.7}, with
\begin{align}
w_0(x)&=ke^{-rx^2+x/2r} U\big(\alpha\, ;  \frac{1}{2}\, ; rx^2\big), \label{eq:4.65}\\
w_1(x) &= \tilde{k}e^{-rx^2+x/2r}\Big[2\alpha rxU\big(\alpha+1\, ;  \frac{3}{2}\, ; rx^2\big)+\big(2r\beta_0-\frac{1}{2r}\big)U\big(\alpha\, ;  \frac{1}{2}\, ; rx^2\big)\Big],\label{eq:4.66}
\end{align}
where $\alpha$ is defined by $\alpha:= {\eta}/{(16r^3)}$, $k={1}/{J(\alpha\, ,r)}$ and $\,\tilde{k}=-k/\eta$.
\noindent  The notation $J(\alpha\, ,r)$ stands for the value of the integral
 \begin{align}
 \int_{-\infty}^{+\infty}\!e^{-rx^2+\frac{x}{2r}}U\big(\alpha\, ; \frac{1}{2}\, ;  rx^2\big)dx. \label{eq:4.67}
\end{align}
\end{proposition}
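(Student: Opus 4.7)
The plan is to solve the second-order linear ODE \eqref{eq:4.62} (of Laplace type) by reducing it to Kummer's confluent hypergeometric equation. I would try the ansatz $w_0(x) = e^{-rx^2 + x/(2r)} G(rx^2)$ and, after computing $w_0'$ and $w_0''$ via the chain rule and substituting, collect coefficients by powers of $x$ in the transformed equation for $G(z)$ with $z = rx^2$. This reduction should yield Kummer's equation $zG'' + (\tfrac{1}{2} - z)G' - \alpha G = 0$ with $\alpha = \eta/(16 r^3)$. Matching the coefficient of $G$ forces the linear-in-$x$ contribution to vanish, which is equivalent to the relation $2r^2 \beta_0 + r \alpha_1 = 1$; this is not stated explicitly in the proposition but is already encoded in \eqref{eq64a}, where the coefficient $-1/r$ in front of $w_0$ equals $-2r\beta_0 - \alpha_1$ precisely under that constraint.

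Having reduced to Kummer's equation, the general solution is $c_1 M(\alpha; \tfrac{1}{2}; rx^2) + c_2 U(\alpha; \tfrac{1}{2}; rx^2)$. I would take $c_1 = 0$: by \eqref{eq:2.23}, $M(\alpha; \tfrac{1}{2}; rx^2)$ grows like $(rx^2)^{\alpha - 1/2} e^{rx^2}$ at infinity, so the Gaussian factor $e^{-rx^2}$ is exactly cancelled and $e^{-rx^2 + x/(2r)} M$ is not integrable on $\mathbb{R}$; whereas by \eqref{eq:2.17}, $U(\alpha; \tfrac{1}{2}; rx^2) \sim (rx^2)^{-\alpha}$ and $e^{-rx^2 + x/(2r)} U$ is of Schwartz class. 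Since $c = \tfrac{1}{2} \in (0,1)$, Table 1 also guarantees that $U$ is bounded as $z \to 0^+$, so $w_0$ is continuous through the origin. The constant $k$ is then pinned down by the normalization $\langle u_0, 1 \rangle = 1$, giving $k = 1/J(\alpha, r)$ with $J(\alpha, r)$ as in \eqref{eq:4.67}.

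For the second weight $w_1$, I would use \eqref{eq:4.63}, namely $\eta w_1 = w_0' + 2r(x - \beta_0) w_0$. Differentiating $w_0 = k e^{-rx^2 + x/(2r)} U(\alpha; \tfrac{1}{2}; rx^2)$ by the chain rule and applying \eqref{eq:2.18}, which gives $U'(\alpha; \tfrac{1}{2}; z) = -\alpha U(\alpha+1; \tfrac{3}{2}; z)$, yields
\begin{align*}
w_0'(x) = k\, e^{-rx^2 + x/(2r)} \Bigl[\bigl(-2rx + \tfrac{1}{2r}\bigr) U(\alpha; \tfrac{1}{2}; rx^2) - 2 r \alpha x\, U(\alpha + 1; \tfrac{3}{2}; rx^2)\Bigr].
\end{align*}
Adding $2r(x - \beta_0) w_0$ cancels the $\pm 2rx\, U(\alpha; \tfrac{1}{2}; rx^2)$ terms and leaves a combination of $U(\alpha; \tfrac{1}{2}; rx^2)$ with coefficient $\tfrac{1}{2r} - 2r\beta_0$ plus $-2r\alpha x\, U(\alpha+1; \tfrac{3}{2}; rx^2)$. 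Dividing by $\eta$ and absorbing the sign into $\tilde{k} = -k/\eta$ reproduces exactly the expression \eqref{eq:4.66}.

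The boundary conditions \eqref{eq64a}--\eqref{eq64b} follow at once from the Gaussian decay of $w_0$ and $w_0'$ at $\pm\infty$: the factor $e^{-rx^2}$ beats every polynomial in $x$ times any power of $(rx^2)^{\pm\alpha}$, so both brackets $[\,\cdot\,]_{\mathbb{R}}$ vanish for any $f \in \mathscr{P}$. The main obstacle is the Kummer reduction described in the first paragraph: the coefficient of $G$ after substitution splits into $x^2$, $x^1$, and $x^0$ contributions that must all collapse to the single constant $-4r\alpha$, and it is the vanishing of the $x^1$ term which both produces the hidden parameter constraint and pins down $\alpha = \eta/(16 r^3)$. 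The integral $J(\alpha, r)$ is left unevaluated in closed form.
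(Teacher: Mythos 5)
Your proposal is correct and follows essentially the same route as the paper: the paper reduces \eqref{eq:4.62} to Kummer's equation by the chain of substitutions $w_0=e^{x/2r}U(x)$, $t=2rx+c_0$ with the choice $c_0=0$ (your hidden constraint $2r^2\beta_0+r\alpha_1=1$), $z=-t^2/4r$, which is exactly your one-step ansatz $w_0=e^{-rx^2+x/2r}G(rx^2)$, then selects the Tricomi solution $U(\alpha;\tfrac12;rx^2)$ for decay, normalizes via $J(\alpha,r)$, and obtains $w_1$ from \eqref{eq:4.63} using \eqref{eq:2.18}, as you do (your sign bookkeeping even confirms $\tilde k=-k/\eta$ as in the statement). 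The only caveat is the phrase ``Schwartz class'': like the paper, you really only establish continuity at the origin and rapid decay at infinity, since $U(\alpha;\tfrac12;rx^2)$ generically has an $|x|$-type behavior at $x=0$, so smoothness there should not be claimed.
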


\begin{proof}
 To obtain a solution of Eq. \eqref{eq:4.62}, Laplace's method could be used again.
 However, in this case,  an analogous procedure does not give  an obvious way to evaluate the integrals that are involved in the solutions.
 Hence,  a different approach will be applied here. First, on substituting $w_0(x)=e^{x/2r}U(x)$ into \eqref{eq:4.62} we obtain
\begin{align*}
U''(x)+(c_0+b_0x) U'(x)+a_0 U(x) = 0,
\end{align*}
where  $\, a_0= 2r-{\alpha_1/2r}+{1/4r^2}$, $\; b_0=2r$ and  $c_0=-\alpha_1-2r\beta_0+{1/r}$.\\
Next, a change of variable $t=b_0x +c_0$ with the transformation $U(x)=V(t)$ easily sets up the equation
\begin{align*}
\hskip2cm V''(t)+b_1t V'(t)+a_1 V(t) = 0, \quad (\mbox{with}\ a_1={a_0/{4r^2}} \ \mbox{and} \ {b_1=1/2r}).
\end{align*}
 On substituting now $V(t)=W(z)$, with $z=-\frac{1}{4r}t^2$, into the last equation  we obtain that $W(z)$ satisfies
\begin{align*}
z W''(z)+(c-z) W'(z)-a W(z) = 0,
\end{align*}
which is actually a Kummer equation with $c=\frac{1}{2}$ and $a=\frac{1}{2}-\alpha$.\\
In order to use the same weight (or two weights)  on  two touching  paths, starting or ending at $0$, we are lead to consider the values
 of the desired solution in the immediate neighborhood of the origin. To this end, if we take $c_0 = 0$, then $z=-rx^2$.
 By the choice we have made, for $r>0$, a  solution of the above Kummer equation may be  taken in the form  $e^{-rx^2}U\big(\alpha\, ; \frac{1}{2}\, ; rx^2\big)$,
  so that the weight function $w_0$ writes  $w_0(x)=ke^{-rx^2+x/2r} U\big(\alpha\, ;  \frac{1}{2}\, ; rx^2\big)$, where $k$ is an arbitrary constant.\\
 It is known that the function $U(a\, ; c\, ;x)$ develops no discontinuities away from $x=0$, being real and finite for all values of the parameters
  and all positive arguments.\\
For $c=\frac{1}{2}<1$, referring to Table 1, it is easy to show that $w_0(x)$ has a finite value at $x=0$ because  we have
$ \underset{x\to 0^-}{\lim}w_0(x)= \underset{x\to 0^+}{\lim}w_0(x)=\Gamma(\frac{1}{2})/\Gamma\left(\frac{1}{2}+\alpha\right)$.\\
On the other hand, since $U(a\, ; c\, ;x)$ behaves like   a power as the argument approaches infinity, due to the presence of the factor $e^{-rx^2}$,
 we immediately conclude that $\lim\limits_{x\to-\infty}w_0(x)=\lim\limits_{x\to+\infty}w_0(x)=0$.\\
We next turn to evaluating the constants $k$. The normalization condition implies
 \begin{align*}
  1= \int_{-\infty}^{+\infty}w_0(x)dx=k \int_{-\infty}^{+\infty}\!\!e^{-rx^2+\frac{x}{2r}}U(\alpha\, ; \frac{1}{2}\, ;  rx^2)dx.
 \end{align*}
The integral in the right hand side actually coincides with the one  indicated  in \eqref{eq:4.67}. From what has already been shown, we can deduce that this integral is convergent but it is far being so easy to evaluate it. We will therefore limit ourselves to considering its supposed value to write  $k=1/ J\big(\alpha\, , r\big)$.\\
Our next concern will be the behaviour of $w'_0(x)$ as $x\to\pm\infty$. Differentiate $w_0(x)$, using \eqref{eq:2.18}, to obtain
\begin{align*}
w'_0(x)=ke^{-rx^2+x/2r}\Big[\big(\frac{1}{2r}-2rx\big)U\big(\alpha\, ;  \frac{1}{2}\, ; rx^2\big)-2\alpha rxU\big(\alpha+1\, ;  \frac{3}{2}\, ; rx^2\big)\Big],
\ x\in \mathbb{R}.
\end{align*}
This clearly shows that $\lim\limits_{x\to\pm\infty}w'_0(x)=0$.   Since $\lim\limits_{x\to\pm\infty}w_0(x)=0$, we conclude that  the vanishing conditions \eqref{eq64a}-\eqref{eq64b} are fulfilled.\\
Inputting finally  both $w'_0(x)$ and $w_0(x)$ by their expressions in Eq. \eqref{eq:4.63}, to  find the function $w_1$ given by \eqref{eq:4.66} with  $\tilde{k}=k/{\eta}$.\\
We conclude this proof by noting that both $w_0(x)$ and $w_1(x)$ are finite at $x=0$, which enables us to assert that the continuity of these functions
 holds on all $\mathbb{R}$.
\end{proof}

\noindent {\bf- Subcase IV.2}: For $\eta = 0$ and $\delta_0=2r$, we have  $\sigma(x)=-\alpha_1$ and $\tau(x)=-x+\beta_0$.\\
Let $\beta_0=0$, $\alpha_1={1}/{2}$ and set $\mu=\delta_0^{-1}$. With these choices, we readily see that the system $({\cal S}_{3})$ implies that
  \begin{align}
  & w_0'+2xw_0=0,\label{eq:4.68}\\
   & w_1'-\mu w_1 =\mu w_0'.\label{eq:4.69}
  \end{align}
with $w_0$ and $w_1$  subject to
  \begin{subequations}
      \begin{align}
 & \left[w_0f\right]_{\mathscr{C}_1}=0, \label{eq70a}\\
 &\left[(w_1- \mu w_0)f\right]_{\mathscr{C}_2}=0,\label{eq70b}
 \end{align}
   \end{subequations}
   where ${\mathscr{C}_1}=]-\infty\, ;+\infty[$ and ${\mathscr{C}_2}=[0_, ;+\infty[$.
  \begin{proposition} For $\mu<0$, the two functionals $u_0$ and $u_1$ admit the integral representations
  \begin{align*}
\big< u_0 \, ,\,f \big> &= \int_{-\infty}^{+\infty}\!\! f(x) w_0(x) dx  ,\ f\in{{\mathscr P}},\\
\big< u_1 \, ,\, f\big> &= \int_{0}^{+\infty} \!\! f(x) w_1(x)dx ,\ f\in{{\mathscr P}},
\end{align*}
where the weight functions $w_0$ and $w_1$ are, respectively, given by
\begin{align*}
  w_0(x)&= \frac{1}{ \sqrt{\pi}} e^{-x^2},\quad x\in\mathbb{R},\notag\\
  w_1(x)&=\mu  w_0(x) + {\frac{1}{2}} \mu^2 e^{\frac{\mu^2}{4}} \left[{\rm erf}\bigl(x+\frac{\mu}{2}\bigr)-{\rm erf}\bigl(\frac{\mu}{2}\bigr)\right]e^{\mu x},
  \quad x\geqslant0. 
\end{align*}
   \end{proposition}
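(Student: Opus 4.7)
The plan is to treat the two equations of system $({\cal S}_3)$ sequentially. For $w_0$, the equation $w_0'+2xw_0=0$ is a linear first-order ODE with separated variables whose general solution is $w_0(x)=Ce^{-x^2}$ on $\mathbb{R}$. The Gaussian decay immediately verifies the boundary condition $\eqref{eq70a}$ at $\pm\infty$, and the normalization $\langle u_0,1\rangle=1$ fixes $C=1/\sqrt{\pi}$, giving the Hermite weight. This already shows that $u_0$ coincides with the classical Hermite functional.

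For $w_1$, I treat $w_1'-\mu w_1=\mu w_0'$ on $[0,+\infty[$ as a linear first-order ODE with integrating factor $e^{-\mu x}$. This gives
\[
 (e^{-\mu x}w_1)'=\mu e^{-\mu x}w_0'(x)=-\tfrac{2\mu}{\sqrt{\pi}}x\,e^{-x^2-\mu x}.
\]
Completing the square $-x^2-\mu x=-(x+\mu/2)^2+\mu^2/4$ and using the substitution $u=x+\mu/2$, I would split the integrand as $(u-\mu/2)e^{-u^2}$ so that the first piece integrates elementarily to $-\tfrac{1}{2}e^{-u^2}$ while the second yields the error function $\tfrac{\sqrt{\pi}}{2}\mathrm{erf}(u)$. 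Multiplying back by $e^{\mu x}$ recovers the form
\[
 w_1(x)=\mu w_0(x)+\tfrac{1}{2}\mu^2 e^{\mu^2/4}\mathrm{erf}(x+\mu/2)\,e^{\mu x}+K\,e^{\mu x}.
\]

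The constant $K$ is then fixed by the boundary condition $\eqref{eq70b}$, which (since $\lambda_0=\lambda_1=0$) requires $(w_1-\mu w_0)(x)f(x)\to 0$ at both endpoints of $\mathscr{C}_2=[0,+\infty[$. At $x=0$ this forces the bracket $[\mathrm{erf}(\mu/2)+K\cdot\tfrac{2}{\mu^2}e^{-\mu^2/4}]$ to vanish, yielding $K=-\tfrac{1}{2}\mu^2 e^{\mu^2/4}\mathrm{erf}(\mu/2)$ and hence the stated expression for $w_1$. This also explains why $\mu<0$ is required and why the support must be the half-line: only then does the factor $e^{\mu x}$ decay at $+\infty$ and make the representation integrable, while $e^{\mu x}$ would blow up at $-\infty$ (this is the structural reason $\mathscr{C}_1\neq\mathscr{C}_2$).

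The only step requiring real verification is the moment/normalization check that $\int_0^{+\infty}w_1(x)dx=\langle u_1,1\rangle=0$. I would handle it by integration by parts on the erf-term with $dv=e^{\mu x}dx$: the boundary contributions vanish (at $0$ since $\mathrm{erf}(x+\mu/2)-\mathrm{erf}(\mu/2)=0$, at $+\infty$ since $e^{\mu x}\to 0$), and the remaining integral collapses, thanks to the identity $\mu x-(x+\mu/2)^2=-x^2-\mu^2/4$, back to a Gaussian integral yielding exactly $-\mu/2$, which cancels the $\mu/2$ coming from $\int_0^{+\infty}\mu w_0(x)dx$. The main obstacle I anticipate is book-keeping of signs and powers of $\mu$ in this cancellation; the rest is algebraic.
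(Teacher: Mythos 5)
Your solution of \eqref{eq:4.68} and your integrating-factor computation for \eqref{eq:4.69} are correct, and your route (solve the two first-order equations, complete the square to produce the error function, fix the integration constant, then check a moment) is essentially the one the paper intends: its own proof is only a sketch deferring to Proposition 4.3 and leaves the verification to the reader. The genuine gap is in your last step, where you claim that the only thing left to verify is $\int_0^{+\infty}w_1\,dx=\langle u_1,1\rangle=0$. What has to be shown is that $\int_0^{+\infty}f\,w_1\,dx=\langle u_1,f\rangle$ for \emph{every} $f\in{\mathscr P}$, and the usual mechanism for this (the pointwise system plus boundary conditions forces the integral-defined functionals to satisfy \eqref{eq:3.12}--\eqref{eq:3.13}, whose moment recursion then pins them down) breaks here precisely because $w_0$ is supported on all of $\mathbb{R}$ while $w_1$ lives only on $[0,+\infty)$. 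Transferring \eqref{eq:4.69} into \eqref{eq:3.13} by integration by parts leaves the unmatched term $\int_{-\infty}^{0}f(x)w_0'(x)\,dx-\delta_0 f(0)w_1(0)$, which is not zero (take $f(x)=x$). Concretely: multiply \eqref{eq:4.69} by $x$, integrate over $[0,+\infty)$, and use $\int_0^{+\infty}w_1\,dx=0$ together with $\int_0^{+\infty}xw_0'\,dx=-\tfrac12$; you get $\int_0^{+\infty}xw_1\,dx=\tfrac12$, whereas duality demands $\langle u_1,x\rangle=\langle u_1,P_1\rangle=1$ (and the second moments disagree too: $\tfrac{1}{\sqrt{\pi}}-\tfrac{1}{\mu}$ versus $(u_1)_2=-\tfrac{2}{\mu}$). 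So the verification you postponed is not mere sign book-keeping: with the constant $K$ fixed by forcing $(w_1-\mu w_0)(0)=0$, the resulting pair satisfies \eqref{eq70a}--\eqref{eq70b} but does not reproduce the moments of $u_1$, and no choice of $K$ on the half-line can, since the defect is a functional supported on $(-\infty,0]$ rather than a point mass at the origin.

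The structural remark at the end of your proposal also points in the wrong direction: the blow-up of $e^{\mu x}$ at $-\infty$ only excludes the homogeneous solution, not the particular one. The solution of \eqref{eq:4.69} selected by decay at $-\infty$, namely $w_1(x)=\mu e^{\mu x}\int_{-\infty}^{x}e^{-\mu t}w_0'(t)\,dt=\mu w_0(x)+\tfrac12\mu^2e^{\mu^2/4}\bigl[1+\mathrm{erf}\bigl(x+\tfrac{\mu}{2}\bigr)\bigr]e^{\mu x}$, decays at both ends (the factor $1+\mathrm{erf}(x+\tfrac{\mu}{2})=\mathrm{erfc}\bigl(-x-\tfrac{\mu}{2}\bigr)$ decays like a Gaussian as $x\to-\infty$), so both weights can be kept on $\mathbb{R}$; then all boundary terms vanish, the functional system holds, and the moments come out right ($0$, $1$, $-2/\mu$, and so on). If you want a representation genuinely supported on $[0,+\infty)$, you must go back to Situation 2 and allow compensating point-mass terms rather than fix $K$ at the origin. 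In short, your computations are fine, but the final identification with $(u_0,u_1)$ — the one step you declared routine — is exactly where the argument (and the statement as printed) needs repair.
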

\begin{proof}
 We can  proceed analogously to the proof of Proposition 4.3 with two slight differences. First, the  two weights $w_0$ and $w_1$ are supported on different intervals.  Second, the disappearance of Dirac's mass. We leave it to the reader to verify that the pair  $w_0$ and $w_1$ given above is the solution to the differential system \eqref{eq:4.68}-\eqref{eq:4.69}, under  the boundary conditions \eqref{eq70a}-\eqref{eq70b}. We conclude  with the observation  that  $w_0(x)$  coincides with the classical Hermite weight.
   \end{proof}

\noindent{\bf V.} For $\delta_1\ne0 ,\, \delta_0=0 \Leftrightarrow s=- r\ne0$, we have the polynomials\\
\indent $\phi(x)=2s\alpha_1+\gamma=-\eta, \ \vartheta(x)=2s(x-\beta_0)-\alpha_1,\ \chi(x)=-(x-\beta_0), \ \varrho(x)=0$.\\
Since  $\phi(x)\ne0$, the subcase $\eta=0$ cannot occur here as noted in \cite[Rem. 3.1]{1}.\\
\noindent- {\bf Subcase V.1}:  Setting  $\eta=-1$, we see that the system $({\cal S}_2)$ writes
\begin{align}
&w_0''+\big(2sx-2s\beta_0-\alpha_1\big) w_0'+(-x+\beta_0+2s)w_0 = 0,\label{eq:4.71} \\
& w_1 = -w_0',\label{eq:4.72}
\end{align}
under the boundary conditions
\begin{subequations} \begin{align}
 & \left[w_0 f'+\big(w_0'+(2sx-s^{-1}) w_0\big)f \right]_{\mathscr{C}}=0,\label{eq73a}\\
  & \big[w_0f\big]_{\mathscr{C}}=0.\label{eq73b}\end{align}
\end{subequations}

\begin{proposition}  For $s>0$ and $\mathscr{C}=\mathbb{R}$, the two weights $w_0$ and $w_1$ are given by
\begin{align*}
w_0(x)&=ke^{-sx^2+x/2s} U\big(\alpha\, ;  \frac{1}{2}\, ; sx^2\big), \\
w_1(x)&=ke^{-sx^2+x/2s}\left[2sxU\big(\alpha\, ;  \frac{3}{2}\, ; sx^2\big)-\frac{1}{2s} U\big(\alpha\, ;  \frac{1}{2}\, ; sx^2\big)\right],
\end{align*}
where $\alpha:={1}/{(16s^3)}$, $k={1}/{J(\alpha\, , s)}$ and $J(\alpha\, , s)$  the value of the integral \eqref{eq:4.67}.
\end{proposition}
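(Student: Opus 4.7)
The plan is to follow the strategy of Proposition 4.4 (Subcase IV.1), exploiting the fact that in system $(\mathcal{S}_2)$ the second weight is just $w_1 = -w_0'$, so only \eqref{eq:4.71} needs to be solved. First, I would reduce \eqref{eq:4.71} to Kummer form by the same chain of substitutions used in the proof of Proposition 4.4. Writing $w_0(x) = e^{x/(2s)} \widetilde U(x)$ absorbs the first-order constant, turning \eqref{eq:4.71} into $\widetilde U'' + (c_0 + b_0 x)\widetilde U' + a_0 \widetilde U = 0$ with $b_0 = 2s$ and $c_0,a_0$ expressible through the free parameters. As in Subcase IV.1, one imposes $c_0 = 0$ (placing the singularity at the origin and fixing the relations among $\beta_0,\alpha_1,\gamma$ compatibly with $\eta = -1$), then applies $t = 2sx$ and $z = -\tfrac{1}{4s}t^2 = -sx^2$ to obtain $z W''(z) + (\tfrac12 - z) W'(z) - a W(z) = 0$. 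A direct bookkeeping computation gives $a_0 = 2s - 1/(4s^2)$, whence $a = a_0/(4s) = \tfrac12 - 1/(16s^3) = \tfrac12 - \alpha$, so Kummer's equation appears with $c = 1/2$ and precisely the index $\alpha = 1/(16s^3)$ of the statement.

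Next, I would pick the Tricomi branch $W(z) = U(\alpha;\tfrac12;-z)$, giving $w_0(x) = k\,e^{-sx^2 + x/(2s)}\, U(\alpha;\tfrac12;sx^2)$. Since $U$ grows only polynomially by \eqref{eq:2.17}, the Gaussian factor $e^{-sx^2}$ (for $s>0$) drives $w_0$ and $w_0'$ to zero at $\pm\infty$; and because $c = 1/2 < 1$, Table 1 shows $U(\alpha;\tfrac12;0) = \Gamma(1/2)/\Gamma(\alpha+1/2)$ is finite, so $w_0$ is continuous at the origin. This immediately verifies the boundary conditions \eqref{eq73a}--\eqref{eq73b}. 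The constant $k$ is then determined by $\langle u_0,1\rangle = 1$, which gives $k = 1/J(\alpha,s)$ with $J(\alpha,s)$ the value of the integral \eqref{eq:4.67} read with $r$ replaced by $s$; as in the proof of Proposition 4.4, we do not evaluate $J$ in closed form but note that convergence follows from the Gaussian decay.

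Finally, to obtain $w_1$, I would differentiate $w_0$ using the Tricomi derivative formula \eqref{eq:2.18}, giving
\[
w_1(x) = -w_0'(x) = k\,e^{-sx^2 + x/(2s)}\Big[\bigl(2sx - \tfrac{1}{2s}\bigr) U(\alpha;\tfrac12;sx^2) + 2\alpha s x\, U(\alpha+1;\tfrac32;sx^2)\Big].
\]
The stated closed form then follows from the contiguous relation $U(\alpha;\tfrac32;z) = U(\alpha;\tfrac12;z) + \alpha\,U(\alpha+1;\tfrac32;z)$, which is the Tricomi analogue of $U(a;c-1;z) = U(a;c;z) - a\,U(a+1;c;z)$ applied at $c = 3/2$. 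The main obstacle is not conceptual but computational: tracking signs and constants through the two nested substitutions and correctly reading off $a_0$ so that $\alpha = 1/(16s^3)$ emerges cleanly, together with verifying that the specific contiguous identity is the right one to collapse the two $U$-terms into the single term $2sx\,U(\alpha;\tfrac32;sx^2)$ displayed in the statement.
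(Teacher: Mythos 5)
Your argument is correct and is in substance the paper's own: the paper simply observes that interchanging $r$ and $s$ makes \eqref{eq:4.71} identical to \eqref{eq:4.62}, pulls $w_0$ from \eqref{eq:4.65}, and gets $w_1=-w_0'$ by differentiating the product $e^{-z}U(\alpha;\tfrac12;z)$ with \eqref{eq:2.19} — which yields the stated two-term form at once — whereas you rerun the Kummer reduction and reach the same expression via \eqref{eq:2.18} followed by the contiguous relation $U(a;c;z)=U(a;c-1;z)+a\,U(a+1;c;z)$, which is a valid (if slightly longer) equivalent route. One small caveat: your bookkeeping value $a_0=2s-\tfrac{1}{4s^2}$ does not follow from $c_0=0$ and $\eta=-1$ alone; with only $c_0=0$ one has $\alpha_1=\tfrac1s-2s\beta_0$ and hence $a_0=2s+\beta_0-\tfrac{1}{4s^2}$, so $\alpha=\tfrac{1}{16s^3}-\tfrac{\beta_0}{4s}$, and the stated value $\alpha=\tfrac{1}{16s^3}$ presupposes the additional normalization $\beta_0=0$ (equivalently $\alpha_1=1/s$, $\gamma=-1$) — an implicit choice the proposition itself (and the paper's sketch) also makes, so it does not affect the validity of your proof of the statement as given.
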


\begin{proof}
 The proof is, for the most part, only sketched since similar reasoning to that used in the proof of Proposition 4.4 applies here with the roles of $r$ and $s$  reversed.
If we interchange $r$ and $s$, we easily see that the two differential equations \eqref{eq:4.62} and \eqref{eq:4.71} are actually identical.  The solution of the latter, satisfying \eqref{eq73a}-\eqref{eq73b}, is therefore drawn from  \eqref{eq:4.65} without extra work.  The solution  of \eqref{eq:4.72} is  immediately deduced from $w_0$ by working out the derivative and use of \eqref{eq:2.19}.
 \end{proof}

 \noindent{\bf VI.} The last case  occurs for  $\delta_1\delta_0\ne0$. We must certainly assume that $sr\ne0$, since otherwise we  recover Case
  {\bf II} or  {\bf III}. Unlike what we announced in \cite[p.18]{1} concerning  this case, we still consider the two subcases $\eta\ne0$ and $\eta=0$.\\
\noindent {\bf- Subcase VI.1}: For $\eta\ne0$, the system to be considered is $({\cal S}_{1})$ with \\
\indent $\,\phi(x)=-\delta_1\delta_0(x-\beta_0)+\delta_1\alpha_1 +{\gamma},\ \vartheta(x)=2s(x-\beta_0)-\alpha_1$, \hfill\break
\indent $\, \chi(x)=-x+\beta_0\ \mbox{and}\ \varrho(x)=\delta_0(x-\beta_0).$\hfill \break
\noindent Setting $\delta_1\delta_0=1$ (or equivalently $s^2 - r^2=1$) and taking $\delta_1\alpha_1+\beta_0+\gamma=0$, we move the singularity of the second order differential equation in $({\cal S}_{1})$ at the origin. We thus obtain
 \begin{align}
&x w_0''+(-2sx+2s\beta_0+\alpha_1+2) w_0'+ (x-\beta_0-2s) w_0 = 0,\label{eq:4.74}\\
&\eta w_1=-x w'_0 +\delta_0(x-\beta_0-\delta_1)w_0,\label{eq:4.75}
 \end{align}
with  $w_0$ and $w_1$  subject to the conditions \eqref{eq:4.8}-\eqref{eq:4.12} yielding
 \begin{subequations} \begin{align}
&\left[xw_0 f'-\left(xw_0'+(2s\beta_0+\alpha_1+1- 2sx)w_0\right)f \right]_{\mathscr{C}}=0, \quad \forall f \in{\mathscr P},\label{eq76a}\\
&\left[x w_0f\right]_{\mathscr{C}}=0 ,\quad \forall f\in {\mathscr P}.\label{eq76b}
 \end{align}\end{subequations}
For simplicity of notations, we set $p=\frac{\delta_0}{2r}\left(\delta_0\beta_0+\alpha_1\right)$ and $q=-\frac{\delta_1}{2r}\left(\delta_1\beta_0+\alpha_1\right)$
which we  can rewrite as $p=-\frac{\delta_0^2}{2r}\gamma\ \mbox{and}\ q=-\frac{\delta_1^2}{2r}\eta$. We now state the  proposition.

 \begin{proposition} Let $\delta_1<0<\delta_0$ and let $p>-1, \, q>-1$ with $p, q\notin \mathbb{Z}_0^+$. Then, the functionals $u_0$ and $u_1$ admit the integral representations \eqref{eq:4.6}-\eqref{eq:4.7}  with
\begin{align}
w_0(x)&\!=\!\left\{\begin{aligned}
&k_1e^{sx}\mathscr{U}\big(rx;q,p\big),  &&\mbox{if}\ x\geqslant0, \\
&k_2e^{sx}\mathscr{V}\big(rx;p,q\big),  &&\mbox{if}\ x\leqslant0 ;
\end{aligned}\right.\label{eq:4.77}\\
 w_1(x)&\!=\!\left\{\begin{aligned}
&\tilde{k}_1e^{sx}\!\left[2rx\mathscr{U}\!\big(rx;q,1+p\big)-(1+\delta_0\beta_0)\mathscr{U}\big(rx;q,p\big)\right],  \ \ \ \mbox{if}\ x\geqslant0, \\
&\tilde{k}_2e^{sx}\!\left[(2rx\!-\!\delta_0\beta_0\!-\!1)\mathscr{V}\!\big(rx;p,q\big)\!-\!2rx\mathscr{V}\!\big(rx;p,1\!+\!q\big)\right],  \ \mbox{if}\ x\leqslant0,
\end{aligned}\right.\label{eq:4.78}
\end{align}
where the constants  $k_i$ and $\tilde{k}_i$, $i=1, 2$, are to be chosen suitably later.
\end{proposition}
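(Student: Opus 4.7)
My plan is to reduce the Laplace-type equation \eqref{eq:4.74} to Kummer's confluent hypergeometric equation by an exponential change of dependent variable, produce one solution that decays at $+\infty$ and another that decays at $-\infty$, splice them at the origin, and then derive $w_1$ directly from \eqref{eq:4.75}. The algebraic input driving the reduction is the case constraint $s^2-r^2=1$, equivalently $\delta_0\delta_1=1$: this says that $\delta_0$ and $\delta_1$ are the two roots of $t^2-2st+1=0$, so the substitution $w_0(x)=e^{ax}\phi(x)$ with $a\in\{\delta_0,\delta_1\}$ kills the coefficient of $x\phi$ in the resulting equation for $\phi$ and reduces the order-of-growth problem to a single confluent hypergeometric equation.

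Concretely, I would insert $w_0(x)=e^{\delta_1 x}\phi(x)$ into \eqref{eq:4.74} and rescale by $\xi=2rx$. A direct calculation that leverages $p+q=2s\beta_0+\alpha_1$ together with $\delta_0\delta_1=1$ should reduce the ODE to the Kummer equation
\[
\xi\,\phi''(\xi)+\bigl((2+p+q)-\xi\bigr)\phi'(\xi)-(1+q)\,\phi(\xi)=0,
\]
whose Tricomi solution $U(1+q;2+p+q;2rx)$ reproduces $\mathscr{U}(rx;q,p)$ via \eqref{eq41a}. Rerunning the same calculation with the other exponent $a=\delta_0$ and rescaling $\xi=-2rx$ produces the twin Kummer equation with parameters $(1+p;2+p+q)$, whose Tricomi solution is exactly $\mathscr{V}(rx;p,q)$ through \eqref{eq41b}. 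The hypothesis $\delta_1<0<\delta_0$ is precisely what ensures that $e^{\delta_1 x}$ decays at $+\infty$ and $e^{\delta_0 x}$ decays at $-\infty$, so each branch of $w_0$ decays exponentially on its own half-line after multiplication by the algebraically decaying Tricomi factor \eqref{eq:2.17}.

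With $w_0$ in hand, the formula for $w_1$ announced in \eqref{eq:4.78} drops out of \eqref{eq:4.75} by direct differentiation using \eqref{eq42a}--\eqref{eq42b}; the coefficient rearrangement is a short bookkeeping involving $\eta=2r\alpha_1-\gamma$ and $\delta_0\delta_1=1$. The four constants $k_i,\tilde k_i$ will then be determined by imposing continuity of $w_0$ at $x=0$ (for which the small-argument asymptotics of $U$ given in Tables~1--2 are invoked, the restriction $p,q\notin\mathbb{Z}_0^+$ keeping the involved Gamma-factors finite) together with the normalisation $\bigl<u_0,1\bigr>=1$, whose two Laplace-type integrals of Tricomi functions are computable via \eqref{eq:2.22} exactly as in the proof of Proposition~4.2.

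I expect the main obstacle to be the careful verification of the boundary conditions \eqref{eq76a}--\eqref{eq76b}. At $\pm\infty$ the exponential factors do the work, but at $0$ the contributions from the two half-lines must cancel, which forces both $xw_0(x)$ and $xw_0'(x)$ to vanish as $x\to 0^\pm$. These one-sided vanishing properties are quite sensitive to the location of $c=2+p+q$ with respect to $1$ and $2$, so a case split into the subranges appearing in Tables~1--2 --- entirely parallel to the limiting cases $\alpha_1=1$ and $\alpha_1=2$ treated after Proposition~4.2 --- is what will ultimately fix the admissible parameters and pin down the explicit values of $k_1,k_2,\tilde k_1,\tilde k_2$.
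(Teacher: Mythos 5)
Your proposal is correct, but it reaches \eqref{eq:4.77}--\eqref{eq:4.78} by a genuinely different construction than the paper. The paper treats this subcase with the Laplace-integral method of Proposition 4.2: it posits $w_0(x)=\int_{\cal C}e^{xt}v(t)dt$ as in \eqref{eq:4.46}, solves \eqref{eq:4.48} with $Q(t)=(t-\delta_0)(t-\delta_1)$ to get $v(t)=(t-\delta_0)^p(t-\delta_1)^q$, and lets the three admissible contours $]-\infty;\delta_1[$, $]\delta_0;+\infty[$, $]\delta_1;\delta_0[$ generate, via \eqref{eq:2.20}--\eqref{eq:2.21}, the solutions $e^{\delta_1x}U(1+q;2+p+q;2rx)$, $e^{\delta_0x}U(1+p;2+p+q;-2rx)$ and an $M$-type solution that is discarded for exponential growth. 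You instead substitute $w_0=e^{\delta_1x}\phi$ (resp. $e^{\delta_0x}\phi$) and rescale $\xi=\pm2rx$ to land directly on Kummer's equation \eqref{eq:2.16} with parameters $(1+q,\,2+p+q)$ (resp. $(1+p,\,2+p+q)$); I checked that the bookkeeping works exactly as you claim, since $p+q=2s\beta_0+\alpha_1$ and $\delta_0\delta_1=1$ make the constant term come out to $1+q$ (resp. $1+p$), so your branches coincide with $e^{sx}\mathscr{U}(rx;q,p)$ and $e^{sx}\mathscr{V}(rx;p,q)$ of \eqref{eq41a}--\eqref{eq41b}. Your route is the more elementary one and mirrors the paper's own treatment of Subcase IV.1 (Proposition 4.4): it avoids justifying differentiation under the integral sign, but it must then argue separately which Kummer solution to keep on each half-line, which you do correctly via $\delta_1<0<\delta_0$ and the algebraic behaviour \eqref{eq:2.17}; the Laplace route buys that selection automatically, since only decaying contour integrals exist, at the price of handling the contour integrals. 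From there the two arguments merge: continuity at the origin via the small-argument tables (the paper, like you, needs the extra restriction $1+p+q<0$ here, relegating $1+p+q=0$ and $2+p+q=0$ to the limiting cases treated afterwards, which matches your anticipated case split on $c=2+p+q$), normalisation through \eqref{eq:2.22}, and $w_1$ from \eqref{eq:4.75} with $\tilde{k}_i=k_i/\eta$.
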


\begin{proof} The result which will proved is directly  derived by simple application of Laplace's method as described in Case III; the proof is then adapted from that of Proposition 4.2. Again, a solution of Eq. \eqref{eq:4.74} is to be found in  the integral form \eqref{eq:4.46} with the function $v$ and the contour ${\cal C}$ obey the two equations \eqref{eq:4.48}, where
\[P(t)=(2s\beta_0+\alpha_1+2)t-(\beta_0+ 2s)\ \mbox{and}\ Q(t)=t^2-2st+1=\left(t-\delta_0\right)\left(t-\delta_1\right).\]
Then it follows immediately that
$$v(t)=\left(t-\delta_0\right)^p\left(t-\delta_1\right)^q\quad \mbox{and}\quad \Big[\left(t-\delta_0\right)^{p+1}\left(t-\delta_1\right)^{q+1}e^{xt}\Big]_{\cal C}=0.$$
It is easy to check that  $\delta_1<0<\delta_0$ is equivalent to $r>0$ {and} $-r<s<r$.\\
Let us now consider the following three intervals
\begin{align*}
{\rm 1.}\ \ {\cal C}_1&=\,] -\infty\, ;\, \delta_1\,[, \ &&\mbox{if}\  x>0, \ q+1>0,\\
{\rm 2.}\ \ {\cal C}_2&=\,]\, \delta_0\, ;\, +\infty\,[, \  && \mbox{if}\ x<0, \ p+1>0,\\
{\rm 3.}\ \  {\cal C}_3&=\,]\, \delta_1~ ;~\delta_0\,[, \ && \forall x\in\mathbb{R},\ p+1>0,\ q+1>0. \hskip 4cm
 \end{align*}
 It is easily seen that the integrand vanishes identically on each interval ${\cal C}_i, i=1,2,3$.\\
 In consequence, three particular solutions of \eqref{eq:4.74}, denoted $w^i_0(x),\, i\in\{1, 3\}$,  may be given in the integral form as
\begin{align*}
w^i_0(x)&=c_i\int_{{\cal C}_i}e^{xt}\left(t-\delta_0\right)^p\left(t-\delta_1\right)^q dt, \quad i\in\{1, 3\}.\hspace{3cm}
\end{align*}
Such solutions occur  under the  required  conditions on $x$, $p$ and $q$, with $c_i,\ i\in\{1, 3\}$, are  constants.
By a change of variable in each integral, due to \eqref{eq:2.20}-\eqref{eq:2.21}, we get
\begin{align*}
w^1_0(x)&=k_1e^{\delta_1x}U\big(1+q\, ; 2+p+q\, ; 2rx\big),  && \mbox{if}\ x>0,\ q>-1,\\
w^2_0(x)&=k_2e^{\delta_0x}U\big(1+p\, ; 2+p+q\, ; -2rx\big),  && \mbox{if}\ x<0,\ p>-1,\\
 w^3_0(x)&=k_3e^{\delta_1x} M\big(1+q\, ; 2+p+q\, ; 2rx\big), &&\mbox{for}\ x\in \mathbb{R},  \ p>-1,\ q>-1,  \
\end{align*}
where  $k_i,\ i\in\{1, 3\}$ are new constants. \\
In virtue of the  behavior of the function $M$ as $x\to \pm\infty$, and on account of the presence of the factor $e^{\delta_1x}$, it is clair that $w^3_0(x)$  is unbounded as $x\to\pm\infty$. \\
However, $w^1_0(x)\to 0$ as $x\to +\infty$ and $w^2_0(x)\to 0$ as $x\to -\infty$, as is easy to check.
 \noindent Therefore the two functions $w^1_0$ and $w^2_0$ will be the ones of interest to us here.
As a result, depending on the order of $p$ and $q$,  a solution of Eq. \eqref{eq:4.74}  can be written as a composition of both functions
  $\mathscr{U}$ and $\mathscr{V}$ as set in \eqref{eq:4.77}.
The continuity of $w_0(x)$ at $x=0$ is  ensured by considering the equality $w^1_0(0^+)=w^2_0(0^-)$, which is possible.\\
Working under the additional condition $1+p+q<0$ and using Table 1, we get
\begin{align*}
w_0^1(0^+)=k_1\frac{\Gamma(-1-p-q)}{\Gamma(-p)}\ \ \mbox{and}\ \ w_0^2(0^-)=k_2\frac{\Gamma(-1-p-q)}{\Gamma(-q)},\ \
 \mbox{for}\ p, q \notin \mathbb{Z}_0^+.
\end{align*}
So, for the above equality to happen, $k_1$ and $k_2$ must satisfy
\begin{align*}
\Gamma(-q)k_1-\Gamma(-p)k_2=0.
\end{align*}
Application of the standard normalization condition now leads to
\begin{align*}
 J_1(p,q) k_1+ J_2(p,q) k_2=1,
\end{align*}
where $J_1(p,q)$ and $J_2(p,q)$ are  two integrals, respectively, defined by
\begin{align*}
 J_1(p,q)&=\!\int_0^{+\infty}\!\!e^{\delta_1x}U\big(1+q\, ; 2+p+q\, ; 2rx\big)dx, \\
 J_2(p,q)&=\!\int_0^{+\infty}\!\!e^{-\delta_0x}U\big(1+p\, ; 2+p+q\, ; 2rx\big)dx.
\end{align*}
After a change of variable and  use of \eqref{eq:2.22} we get
\begin{align*}
J_1(p,q)&=-\frac{\Gamma(-p-q)}{\Gamma(1-q)}\delta_0\, {_2}F_1\big(1\, , 1+q\, ; 1-p\, ; \delta_0^2\big),\\
J_2(p,q)&=\frac{\Gamma(-p-q)}{\Gamma(1-p)}\delta_1 \, {_2}F_1\big(1\, , 1+p\, , ; 1-q\, ; \delta_1^2\big).
\end{align*}
From what has already been shown,  one sees immediately that
\begin{align*}
k_1\!=\!\frac{\Gamma(-p)}{\Gamma(-p)J_1(p,q)\!+\!\Gamma(-q)J_2(p,q)}\ \mbox{and}\ k_2\!=\!\frac{\Gamma(-q)}{\Gamma(-p)J_1(p,q)\!+\!\Gamma(-q)J_2(p,q)}.
\end{align*}
In order to obtain the second weight $w_1$, we start by computing $w'_0(x)$ using \eqref{eq:2.19}. On substituting now $w'_0(x)$ and $w_0(x)$ by their expressions  into  \eqref{eq:4.75}, we obtain \eqref{eq:4.78} with $\tilde{k}_1=k_1/\eta$ and $\tilde{k}_2=k_2/\eta$.
The continuity of $w_1$  at $x=0$ is established by using similar reasoning as in Proposition 4.2. For this purpose, we rely on the  connection between $k_1$ and $k_2$ and on  Table 2 obtaining
$\underset{x\to 0^-}{\lim}w_1(x)=\underset{x\to 0^+}{\lim}w_1(x)$, which is the desired result.
\end{proof}
Turning now to the proof of the two limiting cases that reappear here again. We note first that if we combine the restriction $1+p+q<0$ with those previously
 imposed on each of $p$ and $q$ we readily find that $-2<p+q<-1$.\\
 Although  the values of $p$ and $q$ here and  in Case III are distinct, the demonstrations are similar in spirit. And so,
    the related results will be announced without proofs. \\
\noindent $\bullet$ For  $1 + p + q = 0$, it is fairly easy to see that $p$ and $q$ are each lie in $]-1\, ; 0[$.
Here, the measures  are given via   two   weights with  add of Dirac masses at the origin:
\begin{align*}
 w_0(x)&=ke^{\delta_1x}U\big(-p\, ; 1\, ; 2rx\big),\ \ x\geqslant0,\\
 w_1(x)&= \tilde{k}e^{\delta_1x}\left[2rxU\big(-p\, ; 2\, ; 2rx\big)-(1+\delta_0\beta_0)U\big(-p\, ; 1\, ; 2rx\big)\right],\ \ x\geqslant0,
\end{align*}
satisfying the boundary  conditions
\begin{align*}
&\left[xw_0 f'-x\left(w_0'-2sw_0\right)f \right]_0^{+\infty}+\lambda_0\left(f'(0)-\beta_0f(0)\right)=0, \quad \forall f \in {{\mathscr P}},\\
&\left[x w_0f\right]_0^{+\infty}-\eta\lambda_1f(0)-\delta_0\beta_0\lambda_0 f(0)=0 ,\quad \forall f\in {{\mathscr P}},
 \end{align*}
which, combined to the normalization condition, enable us to write
\begin{align*}
 k&=\frac{\beta_0\Gamma(1-p)}{p-\beta_0 \delta_0\,{_2}F_1\big(1\, ,-p\, ; 1-p\, ; \delta_0^2\big)},\ \tilde{k}=\frac{ \delta_1^2 k}{2r(1+p)}, \\
 \lambda_0&=-\frac{k}{\beta_0\Gamma(-p)}\ \mbox{and} \ \lambda_1= \frac{\delta_1 k}{2r(1+p)\Gamma(-p)}.
\end{align*}
 $\bullet$ For $2 + p + q = 0$, an easy computation shows that $p=q=-1$, which in turn leads to  $\beta_0=-2s$, $\alpha_1=2(s^2+r^2)$ and $\eta=2r/\delta_1^2$.
  Thus, Eqs. \eqref{eq:4.74}-\eqref{eq:4.75} yield
\begin{align*} &w_0''-2s w_0'+   w_0 = 0,\\
&\eta w_1=-x w'_0 +\delta_0(x-\beta_0-\delta_1)w_0,
 \end{align*}
 under the related boundary conditions resulting from  \eqref{eq76a}-\eqref{eq76b}.
The general solution of the first equation is given by $w_0(x)= c_1e^{\delta_1x}+c_2e^{\delta_0x}$ which is valid for all $x\in \mathbb{R}$, with $c_1$, $c_2$
are two arbitrary constants.\\
There are three possibilities to provide integral representations for  $u_0$ and $u_1$.\\
First, by means of weights  supported on ${\mathscr{C}}=]-\infty\, ;+\infty[$ for which  the related results will be announced without proof.
In the two others, namely,  for the supports ${\mathscr{C}}=[0\, ;+\infty[$ or ${\mathscr{C}}=]-\infty\, ;0]$,   the measures are given via two weights with adding  Dirac masses at $x=0$. These alternatives are left to the reader.\\
For ${\mathscr{C}}=]-\infty\, ;+\infty[$, as in Case III, we quickly obtain the two weights
\begin{align*}
w_0(x)=\left\{\begin{aligned}
&c e^{\delta_1x}, &&\mbox{if}\ x\geqslant0,\\
&c e^{\delta_0x}, &&\mbox{if}\ x\leqslant0,
\end{aligned}\right.\quad\mbox{and}\quad
w_1(x)=\left\{\begin{aligned}
&-c^2\left(1+2r\delta_1^2x\right)e^{\delta_1x}, &&\mbox{if}\ x\geqslant0,\\
&-c^2 e^{\delta_0x}, &&\mbox{if}\ x\leqslant0,
\end{aligned}\right.
\end{align*}
 where  $c=- {1}/{(2r)}$. The boundary conditions  are evidently met.\par

\noindent {\bf - Subcase VI.2}: For $\eta = 0$, $\delta_1\delta_0\ne0$ and $sr\ne0$, the system to be used is once again $({\cal S}_{3})$ with the polynomials $\sigma(x)=\delta_1(x-\beta_0)-\alpha_1$ and $ \tau(x)=-x+\beta_0+\delta_1$.
We choose $\delta_1=-1(\Rightarrow \delta_0=2r-1)$,   $\beta_0=\alpha_1:=\alpha+1$ and set $\nu=\delta_0^{-1}$ to obtain
\begin{align}
& xw_0'+(x-\alpha)w_0=0,\label{eq:4.79}\\
& w_1'-\nu w_1 = \nu w_0',\label{eq:4.80}
\end{align}
with $w_0$ and $w_1$  subject to
\begin{subequations} \begin{align}
  & \left[x w_0f \right]_{\mathscr{C}}=0,\label{eq81a}\\
  & \left[(w_1-\nu w_0)f\right]_{\mathscr{C}}=0.\label{eq81b}
\end{align}\end{subequations}

\begin{proposition} Let $-1<\nu<0$. For $\alpha\geqslant0$  and $x\geqslant0$, the two functionals $u_0$ and $u_1$ admit the integral representations
\eqref{eq:4.21}-\eqref{eq:4.22} with $\lambda_0=\lambda_1= 0$, where
\begin{align*}
 w_0(x)=\frac{1}{\Gamma(\alpha+1)}x^\alpha e^{-x}\quad\mbox{and}\quad
 w_1(x)=\nu w_0(x) -{\frac{\nu^2 e^{\nu x}}{(\nu+1)^{\alpha+1}}} {\frac{\gamma\big(\alpha+1,(\nu+1)x\big)}{\Gamma(\alpha+1)}}.
 \end{align*}
    \end{proposition}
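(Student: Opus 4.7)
The argument follows the template of Proposition 4.3, adapted to the present parameter setting.

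First, I would integrate the first-order ODE \eqref{eq:4.79} by separation of variables on $x>0$, obtaining $w_0(x) = c_1 x^\alpha e^{-x}$, with $w_0$ supported on $[0,+\infty[$ as dictated by the first vanishing condition \eqref{eq81a} and the choice $\mathscr{C}=[0,+\infty[$. The normalization $\langle u_0,1\rangle = 1$ then fixes $c_1 = 1/\Gamma(\alpha+1)$, so $u_0$ coincides with the classical Laguerre functional ${\cal L}^{(\alpha)}$, exactly as in Proposition 4.3.

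Second, I would treat \eqref{eq:4.80} as a linear first-order ODE in $w_1$ with integrating factor $e^{-\nu x}$, yielding
$$e^{-\nu x}w_1(x) - w_1(0) = \nu\int_0^x e^{-\nu t} w_0'(t)\,dt.$$
An integration by parts (noting $w_0(0)=0$ when $\alpha>0$) gives $\int_0^x e^{-\nu t}w_0'(t)\,dt = e^{-\nu x}w_0(x) + \nu\int_0^x e^{-\nu t}w_0(t)\,dt$. Since $\nu+1>0$, formula \eqref{eq:2.27} rewrites the last integral as
$$\int_0^x e^{-\nu t}w_0(t)\,dt = \frac{1}{\Gamma(\alpha+1)}\int_0^x t^\alpha e^{-(\nu+1)t}\,dt = \frac{\gamma(\alpha+1,(\nu+1)x)}{(\nu+1)^{\alpha+1}\Gamma(\alpha+1)}.$$
Multiplying back by $e^{\nu x}$ reconstructs $w_1$ up to the homogeneous term $w_1(0)e^{\nu x}$, whose value is to be fixed in the next step.

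The third step is to verify the boundary conditions \eqref{eq81a}--\eqref{eq81b} and determine $\lambda_0$, $\lambda_1$ and $w_1(0)$. Because $-1<\nu<0$, one has $e^{\nu x}\to 0$ and $\gamma(\alpha+1,(\nu+1)x)\to\Gamma(\alpha+1)$ as $x\to+\infty$, while \eqref{eq:2.29} controls the decay of $w_0$; these ensure the bracketed quantities at $+\infty$ in \eqref{eq81a}--\eqref{eq81b} vanish. At $x=0$, the factor $x$ in \eqref{eq81a} absorbs the $x^\alpha$ from $w_0$ (since $\alpha\geq 0$), allowing $\lambda_0=0$. Evaluating \eqref{eq81b} at $f\equiv 1$ together with the moment identity $\langle u_1,1\rangle=0$ pins down $w_1(0)$ and forces $\lambda_1=0$; the Laplace-type integral $\int_0^{+\infty}e^{\nu x}\gamma(\alpha+1,(\nu+1)x)\,dx = -(\nu+1)^{\alpha+1}\Gamma(\alpha+1)/\nu$ (computed by one further integration by parts) closes the consistency check. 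The exceptional subcase $\alpha=0$ has $w_0(0)=1\neq 0$, but a cosmetic adjustment of the boundary term in the integration by parts restores the same closed form.

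The main obstacle I anticipate is the simultaneous bookkeeping of the free constant $w_1(0)$ and of the Dirac-mass coefficients $\lambda_0,\lambda_1$: one must exhibit a single value of $w_1(0)$ that reconciles \eqref{eq81b} at the origin, the vanishing moment $\langle u_1,1\rangle=0$, and the vanishing of the atoms, using the explicit evaluation of the incomplete-gamma integral above. Once that identity is in hand, the remainder of the argument is routine.
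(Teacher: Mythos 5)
Your plan is correct and follows essentially the same route as the paper's (sketched) proof: reuse the Laguerre weight $w_0(x)=x^{\alpha}e^{-x}/\Gamma(\alpha+1)$ from Proposition 4.3 since \eqref{eq:4.79} coincides with \eqref{eq:4.57}, then solve \eqref{eq:4.80} by the integrating factor $e^{-\nu x}$, an integration by parts and \eqref{eq:2.27}, fixing the homogeneous constant through the boundary condition \eqref{eq81b} and $\left<u_1\,,\,1\right>=0$ (with the special treatment of $\alpha=0$). Note only that carrying your computation to the end yields $w_1(x)=\nu w_0(x)+\frac{\nu^2 e^{\nu x}}{(\nu+1)^{\alpha+1}}\frac{\gamma\big(\alpha+1,(\nu+1)x\big)}{\Gamma(\alpha+1)}$, i.e.\ with a plus sign, which is the version that actually satisfies \eqref{eq:4.80} and, by your own evaluation $\int_0^{+\infty}e^{\nu x}\gamma\big(\alpha+1,(\nu+1)x\big)dx=-(\nu+1)^{\alpha+1}\Gamma(\alpha+1)/\nu$, the requirement $\left<u_1\,,\,1\right>=0$ with $\lambda_1=0$; the minus sign in the printed statement is evidently a misprint (or the complementary function $\Gamma\big(\alpha+1,(\nu+1)x\big)$ was intended, as in Proposition 4.3, in which case an additional term $\frac{\nu^{2}e^{\nu x}}{(\nu+1)^{\alpha+1}}$ must appear).
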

    \par

\begin{proof}
  Observe that Eqs. \eqref{eq:4.79} and  \eqref{eq:4.57} are identical.  So, analysis similar to that in the proof of Proposition 4.3 shows that  they have the same solution which is the classical Laguerre's weight function. On substituting  this function  into \eqref{eq:4.80} and then perform integration by parts we obtain the weight function $w_1(x)$ which holds for  $\alpha\geqslant0$. In the same manner we can see that, under the conditions stated above, the vanishing conditions \eqref{eq81a}-\eqref{eq81b} are fulfilled.
  \end{proof}
\noindent {\bf Remark 4.3.} It is worth noting that  the recurrence coefficients given by the expressions \eqref{eq:1.3}-\eqref{eq:1.5} simply provide particular solutions to
the system (2.16)-(2.21) established in \cite{1}.
The study we have carried  in the two parts of this work was done under the conditions $\theta_n=1$ and $\rho_n=1$.  This case  was denoted  {\bf (A-i)}.\\
It remains however an open question if there is another particular solution for the same system when  assuming that $\theta_n=1$ and $\rho_n=\frac{n+\rho+1}{n+\rho}, n\geqslant0,\ \rho\notin \mathbb{Z}_-$ (referred to as Case {\bf (A-ii)}).
Currently, we are not able to answer affirmatively to this issue. \par

\end{document}